\theoremstyle{plain}
\newtheorem{thm}{Theorem}[section]
\newtheorem{lem}[thm]{Lemma}
\newtheorem{prop}[thm]{Proposition}
\newtheorem{cor}[thm]{Corollary}
\theoremstyle{definition}
\newtheorem{defn}[thm]{Definition}
\newtheorem{rem}[thm]{Remark}
\theoremstyle{remark}
\newcommand{\bbA}{\mathbb{A}}
\newcommand{\bbB}{\mathbb{B}}
\newcommand{\bbC}{\mathbb{C}}
\newcommand{\bbP}{\mathbb{P}}
\newcommand{\bbQ}{\mathbb{Q}}
\newcommand{\bbR}{\mathbb{R}}
\newcommand{\bbZ}{\mathbb{Z}}
\newcommand{\bfO}{\mathbf{O}}
\newcommand{\calD}{\mathcal{D}}
\newcommand{\calG}{\mathcal{G}}
\newcommand{\calL}{\mathcal{L}}
\newcommand{\calN}{\mathcal{N}}
\newcommand{\calO}{\mathcal{O}}
\newcommand{\al}{\alpha}
\newcommand{\gam}{\gamma}
\newcommand{\Gam}{\Gamma}
\newcommand{\del}{\delta}
\newcommand{\lam}{\lambda}
\newcommand{\Lam}{\Lambda}
\newcommand{\sig}{\sigma}
\newcommand{\om}{\omega}
\DeclareMathOperator{\U}{U}
\DeclareMathOperator{\M}{M}
\DeclareMathOperator{\PSL}{PSL}
\DeclareMathOperator{\GL}{GL}
\DeclareMathOperator{\PU}{PU}
\DeclareMathOperator{\Sp}{Sp}
\DeclareMathOperator{\tr}{tr}
\DeclareMathOperator{\Aut}{Aut}
\DeclareMathOperator{\Hom}{Hom}
\newcommand{\bs}{\backslash}
\newcommand{\lra}{\longrightarrow}
\newcommand{\nil}{\varnothing}
\newcommand{\ssm}{\smallsetminus}
\newcommand{\conj}{\overline}
\newcommand{\wt}{\widetilde}
\newenvironment{pf}{\begin{proof}}{\end{proof}}
\newenvironment{enum}{\begin{enumerate}}{\end{enumerate}}
\let\@@pmod\pmod
\DeclareRobustCommand{\pmod}{\@ifstar\@pmods\@@pmod}
\def\@pmods#1{\mkern4mu({\operator@font mod}\mkern 6mu#1)}
\begin{document}

\title{Residual finiteness for central extensions of lattices in $\PU(n,1)$ and negatively curved projective varieties}

\author{Matthew Stover\\ \small{Temple University}\\ \small{\textsf{mstover@temple.edu}} \and Domingo Toledo\\ \small{University of Utah}\\ \small{\textsf{toledo@math.utah.edu}}}

\date{\today}

\maketitle

\begin{center}
\vspace{-1.5em}\emph{To Herb Clemens in celebration of his many contributions to our understanding and enjoyment of the topology of algebraic varieties}
\end{center}

\begin{abstract}
We study residual finiteness for cyclic central extensions of cocompact arithmetic lattices $\Gam < \PU(n,1)$ of simple type. We prove that the preimage of $\Gam$ in any connected cover of $\PU(n,1)$, in particular the universal cover, is residually finite. This follows from a more general theorem on residual finiteness of extensions whose characteristic class is contained in the span in $H^2(\Gam, \bbZ)$ of the Poincar\'e duals to totally geodesic divisors on the ball quotient $\Gam \bs \bbB^n$. For $n \ge 4$, if $\Gam$ is a congruence lattice, we prove residual finiteness of the central extension associated with any element of $H^2(\Gam, \bbZ)$.

Our main application is to existence of cyclic covers of ball quotients branched over totally geodesic divisors. This gives examples of smooth projective varieties admitting a metric of negative sectional curvature that are not homotopy equivalent to a locally symmetric manifold. The existence of such examples is new for all dimensions $n \ge 4$.
\end{abstract}

\section{Introduction}\label{sec:Intro}

It is a classical result that if $\Gam < \PSL_2(\bbR) \cong \PU(1,1)$ is a lattice and $\wt{\Gam}$ denotes its preimage in the universal covering group $\wt{\PSL}_2(\bbR) \cong \wt{ \PU}(1,1)$, then $\wt{\Gam}$ is residually finite. In fact, $\wt{\Gam}$ is a linear group, i.e., it admits a faithful representation into $\GL_N(\bbR)$ for some $N$, even though its ambient Lie group $\wt{\PSL}_2(\bbR)$ is not a linear Lie group. See \cite[\S IV.48]{delaHarpe} for an account of several closely-related perspectives on how one can prove these results. In the other direction, following work of Deligne \cite{Deligne}, one can use the congruence subgroup property to build many lattices in nonlinear Lie groups that are not linear or residually finite, e.g., the preimage of $\Sp_{2n}(\bbZ)$ in the universal cover of $\Sp_{2n}(\bbR)$ for $n \ge 2$.
 
These examples illustrate the interest in studying linearity of lattices in nonlinear Lie groups. Particularly interesting are the universal covering groups $\wt{\PU}(n,1)$ of $\PU(n,1)$, $n \ge 2$. In recent work \cite{StoverToledo}, we developed tools for proving residual finiteness of preimages of lattices in $\PU(n,1)$ in $\wt{\PU}(n,1)$, each inspired by a variant of the proof for $\PU(1,1)$, and found the first examples of residually finite lattices in $\wt{\PU}(2,1)$. One purpose of this paper is to vastly improve upon the class of examples with this property. Recall that arithmetic lattices in $\PU(n,1)$ can be classified using hermitian forms on $\calD^r$, where $\calD$ is a certain central simple division algebra with involution of second kind. In this paper we consider those of \emph{simple type}, namely those constructed using hermitian forms over number fields; see \S\ref{ssec:Simple} for a precise description. We will prove:

\begin{thm}\label{thm:MainRF}
Let $\Gam < \PU(n,1)$ be a cocompact arithmetic lattice of simple type. Then the preimage of $\Gam$ in $\wt{\PU}(n,1)$ is residually finite. In fact, the preimage of $\Gam$ in any connected cover of $\PU(n,1)$ is residually finite.
\end{thm}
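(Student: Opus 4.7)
The plan is to deduce Theorem~\ref{thm:MainRF} from the general residual finiteness criterion announced in the abstract: a central extension of $\Gam$ classified by $c \in H^2(\Gam,\bbZ)$ is residually finite whenever $c$ lies in the $\bbZ$-span of Poincar\'e duals to totally geodesic divisors on $M = \Gam\bs\bbB^n$. Granting this criterion, the proof breaks into (a) identifying the characteristic class of the central extensions under study in cohomological terms, and (b) constructing enough totally geodesic divisors on simple-type ball quotients.

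For (a), I would use that $\pi_1(\PU(n,1)) \cong \bbZ$, generated by a Hopf-type circle in the maximal compact subgroup, so that $\wt{\PU}(n,1) \to \PU(n,1)$ is classified by a generator of $H^2(\PU(n,1),\bbZ) \cong \bbZ$. Pulling back by $B\Gam \to B\PU(n,1)$ and using that $M$ is a $K(\Gam,1)$ (as $\bbB^n$ is contractible), the characteristic class of $1 \to \bbZ \to \wt\Gam \to \Gam \to 1$ is a nonzero integer multiple of the integer K\"ahler class $[\om] \in H^2(M,\bbZ)$. Any connected cover $P \to \PU(n,1)$ corresponds to a subgroup $k\bbZ \le \bbZ$, and the preimage of $\Gam$ in $P$ sits in a central extension classified by the reduction of $[\om]$ modulo $k$; equivalently it is a quotient of $\wt\Gam$ by a finite central subgroup, so residual finiteness descends once it is established for $\wt\Gam$. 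Everything thus reduces to placing $[\om]$ in the $\bbZ$-span of Poincar\'e duals to totally geodesic divisors, and since residual finiteness passes from a finite-index subgroup to the full group, it suffices to do so on a deep enough congruence cover $M'$.

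For (b), which I expect to be the main obstacle, I would exploit the simple-type hypothesis. By definition, $\Gam$ is commensurable with the unitary group of a hermitian form $h$ of signature $(n,1)$ on $k^{n+1}$ with $k/k_0$ an imaginary quadratic extension of a totally real field. Any vector $v \in k^{n+1}$ with $h(v,v) > 0$ has $v^\perp$ of signature $(n-1,1)$, whose projectivization is a totally geodesic copy of $\bbB^{n-1} \subset \bbB^n$; on a sufficiently deep congruence cover $M'$, suitable Galois-stable orbits of such hyperplanes descend to smooth totally geodesic divisors $D \subset M'$. The key cohomological fact I would need is that, as $v$ varies, the $\bbZ$-span of the classes $[D] \in H^2(M',\bbZ)$ contains a positive integer multiple of the K\"ahler class of $M'$. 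This is where simple type is essential: in the non-simple (``division algebra'') case there are no totally geodesic divisors at all, while in the simple case there are enough of them to detect the K\"ahler direction in $H^{1,1}(M')$ by intersection pairings. Upgrading this from a $\bbQ$-statement to an integral one may require careful manipulation of Galois-stable unions of divisors, and is the place I expect the hardest bookkeeping.
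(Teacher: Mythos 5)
Your high-level outline is in the right spirit -- reduce to identifying the characteristic class, show it is in the span of duals to totally geodesic divisors, then apply a general criterion -- but there are two genuine gaps, one of which would sink the argument as written.

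The more serious gap is in your reduction from the universal cover to general connected covers. You assert that since the preimage of $\Gam$ in a $k$-fold cover is a quotient of $\wt\Gam$ by a finite central subgroup, ``residual finiteness descends once it is established for $\wt\Gam$.'' Residual finiteness does \emph{not} pass to quotients, even central ones; separability of a central cyclic subgroup in a residually finite group is a strictly stronger property than residual finiteness of the ambient group. This is precisely the subtlety that makes the theorem nontrivial (compare the Deligne examples discussed in the introduction, where central quotients of arithmetic groups fail to be residually finite). The paper handles this by working with a stronger intermediate property, \emph{condition} $N$ of Definition~\ref{def:conditionn}: the existence of a nilpotent quotient of $\wt\Gam$ that is injective on the center. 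Condition $N$ does pass to the reductions $\wt\Gam_d = \wt\Gam/d\bbZ$ (Lemma~\ref{lem:behavior}(3)), and it is established in the first place via Theorem~\ref{thm:CentralRF1}: the extension satisfies condition $N_2$ if and only if its characteristic class lies (rationally) in the image of the cup product $\bigwedge^2 H^1 \to H^2$. Your proposal never engages with this nilpotent-quotient / cup-product mechanism, but it is the load-bearing part; ``granting the criterion'' means granting almost the entire paper.

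The second gap is in how you propose to show the K\"ahler class lies in the span of totally geodesic divisor classes. You envision building divisors from hermitian vectors and detecting the K\"ahler direction by intersection pairings, with the hard part being an upgrade from $\bbQ$-span to $\bbZ$-span. Neither piece matches the actual argument. The paper proves the span statement (Theorem~\ref{thm:ChernInSpan}) over $\bbC$ using Kudla--Millson theory: if the Chern class were not in the span, a certain generating function built from periods over special cycles would be a nonzero constant, contradicting the fact that it is a holomorphic modular form of positive weight. This is a global automorphic argument, not a pointwise intersection-number computation, and there is no known elementary replacement. Moreover, because Theorem~\ref{thm:CentralRF1} only requires the class to lie in the image of the cup product with $\bbQ$-coefficients, the integral refinement you anticipate as ``the hardest bookkeeping'' is never needed; the passage from ``span of special cycles'' to ``image of cup product'' instead goes through the theta correspondence (Theorem~\ref{thm:Cup}), after passing to a deeper congruence cover. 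Passing to that cover is essential (there are explicit examples, such as the Cartwright--Steger surface, where it fails on the nose), and your proposal does not isolate this step.
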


This answers \cite[Qu.~1]{StoverToledo} for these lattices. Moreover, the method of proof completely answers \cite[Qu.~2]{StoverToledo}. While completing this manuscript, we learned that Richard Hill also very recently proved Theorem~\ref{thm:MainRF} using related but ultimately quite different methods \cite{Hill}. The proof of Theorem~\ref{thm:MainRF} is in \S\ref{sec:Proofs}, where we prove the following much more general result about residual finiteness of central extensions of lattices in $\PU(n,1)$.

\begin{thm}\label{thm:MainRF2}
Suppose that $\Gam \bs \bbB^n$ is a smooth compact ball quotient with $\Gam$ a congruence arithmetic lattice of simple type. Let $\wt{\Gam}$ be the central extension of $\Gam$ by $\bbZ$ with characteristic class $\phi \in H^2(\Gam, \bbZ)$.
\begin{enum}

\item If $n\ge 4$, then $\wt{\Gam}$ is residually finite.

\item If $n <4$, then $\wt{\Gam}$ is residually finite under the following additional assumptions:

\begin{enum}

\item If $n = 3$, assume that $\phi \in H^{1,1}(\Gam \bs \bbB^3, \bbC) \cap H^2(\Gam \bs \bbB^3, \bbZ)$.

\item If $n = 2$, assume that $\phi$ is contained in the span of the Poincar\'e duals to the totally geodesic divisors on $\Gam \bs \bbB^2$.

\end{enum}

\end{enum}
Moreover, if $\conj{\phi} \in H^2(\Gam, \bbZ / d)$ is the reduction modulo $d$ of any class $\phi$ in $H^2(\Gam, \bbZ)$ satisfying the above hypotheses, then the central extension of $\Gam$ by $\bbZ / d$ associated with $\conj{\phi}$ is residually finite.
\end{thm}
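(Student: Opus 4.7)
The plan is to reduce the statement, in each case, to the more general intermediate result alluded to in the introduction: any central extension $\wt\Gam_\phi$ whose characteristic class $\phi \in H^2(\Gam,\bbZ)$ lies in the subgroup $\calT \le H^2(\Gam,\bbZ)$ spanned by Poincar\'e duals of totally geodesic divisors on $\Gam \bs \bbB^n$ is residually finite. Assuming this key result is in place earlier in the paper, the task becomes verifying, via Hodge theory, that the given $\phi$ belongs to $\calT$, possibly after passing to a finite-index subgroup.

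The case $n = 2$ is immediate, as $\phi \in \calT$ by hypothesis. For $n = 3$ I would invoke a Hodge-conjecture-type spanning result for congruence arithmetic ball quotients of simple type, in the spirit of Bergeron--Millson--Moeglin, asserting that every rational $(1,1)$-class on $\Gam \bs \bbB^n$ lies in the $\bbQ$-span of classes of totally geodesic divisors; combined with the hypothesis that $\phi$ is of type $(1,1)$, this places $\phi$ in $\calT \otimes \bbQ$. For $n \ge 4$ I would additionally apply Matsushima's formula, which in this range forces $H^{2,0}(\Gam \bs \bbB^n) = 0$ for the relevant lattices, so $H^2 = H^{1,1}$ and the spanning result applies to every integral class.

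Because the Hodge-theoretic input is naturally rational, $\phi$ itself may only satisfy $N\phi \in \calT$ for some positive integer $N$. To bridge the gap I would pass to a sufficiently deep congruence subgroup $\Gam' \le \Gam$ on which the restriction of $\phi$ becomes an integral combination of totally geodesic divisor classes from $\Gam' \bs \bbB^n$, apply the key result to $\wt{\Gam'}_{\phi|_{\Gam'}}$, and then deduce residual finiteness of $\wt\Gam_\phi$ via the fact that a group containing a residually finite subgroup of finite index is itself residually finite. The ``moreover'' clause about mod $d$ reductions follows once one verifies that $d\bbZ \le Z(\wt\Gam_\phi)$ is closed in the profinite topology on $\wt\Gam_\phi$, which in turn reduces to producing, for every $k \in \{1,\dots,d-1\}$, a finite quotient of $\wt\Gam_\phi$ in which the central generator $z$ has order a multiple of $d$ not dividing $k$; this flexibility should be built into the proof of the key result, whose finite quotients arise from unitary representations in which the center acts by a character of prescribable finite order.

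The main obstacle is the Hodge-theoretic spanning statement itself. Rational spanning of $(1,1)$-cohomology by totally geodesic divisor classes is a form of the Hodge conjecture for ball quotients and is only known under strong arithmetic hypotheses; making the simple type assumption carry this weight, and carefully bridging between rational spanning and the integral framework in which residual finiteness is formulated (using congruence subgroups and the mod $d$ quotient flexibility), is the delicate part of the argument.
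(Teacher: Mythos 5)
Your overall strategy --- verify via Hodge theory that the hypotheses place $\phi$ (after a congruence cover) into the span $\mathrm{SC}^1$ of Poincar\'e duals to totally geodesic divisors, and then invoke a ``key result'' that such classes yield residually finite extensions --- is conceptually aligned with the paper, but the two ingredients you use are not available in the form you need. There is no such stand-alone key result earlier in the paper: the criterion actually established is Theorem~\ref{thm:CentralRF1}, which requires the rationalized characteristic class to lie in the image of the cup product $\bigwedge^2 H^1 \to H^2$. Passing from ``$\phi\in\mathrm{SC}^1$'' to a cup product after a congruence cover is the content of Theorem~\ref{thm:Cup}(1), and rests on Kudla--Millson theory and the Bergeron--Millson--Moeglin analysis over the Shimura-variety inverse limit; your proposal does not supply this bridge, and without it the implication from ``$\phi\in\mathrm{SC}^1$'' to residual finiteness is not established.

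The more concrete gap is the $n\ge 4$ case. Matsushima's formula decomposes cohomology over cohomological automorphic representations but does not by itself determine which of them occur, so it cannot yield $H^{2,0}(\Gam\bs\bbB^n)=0$. The input the paper actually uses for $n\ge 4$ is \cite[Cor.~7.3]{Acta} (Arthur's classification plus the theta correspondence), which shows all of $H^2(X,\bbC)$ is generated by theta lifts; by Kudla--Millson the span of theta lifts equals $\mathrm{SC}^1(X)$, and since special cycles are algebraic of type $(1,1)$, vanishing of $H^{2,0}$ follows as a by-product of that machinery, not of Matsushima's formula. (In fact the paper's Theorem~\ref{thm:Cup} handles all three cases of the theorem uniformly without ever passing through $H^{2,0}=0$.) Finally, the ``moreover'' clause on mod-$d$ reductions is handled in the paper much more simply: Lemma~\ref{lem:behavior}(3), citing \cite[Lem.~2.6]{StoverToledo}, shows that once the $\bbZ$-extension satisfies condition $N$, every reduction $\wt\Gam_d$ is automatically residually finite, making your argument via profinite closure and prescribable central characters unnecessary.
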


We now describe why Theorem~\ref{thm:MainRF} is a consequence of Theorem~\ref{thm:MainRF2}. Residual finiteness is a commensurability invariant, so it suffices to assume that $\Gam$ is a torsion-free congruence arithmetic lattice. Then, the canonical class $c_1(\Gam \bs \bbB^n) \in H^2(\Gam \bs \bbB^n, \bbZ)$ is the characteristic class associated with the preimage $\wt{\Gam}$ of $\Gam$ in $\wt{\PU}(n,1)$, and similarly the reduction $\wt{\Gam}_d$ of $\wt{\Gam}$ modulo $d$ is isomorphic to the preimage of $\Gam$ in the $d$-fold connected cover of $\PU(n,1)$. Under our hypotheses, $c_1(\Gam \bs \bbB^n)$ is contained in the span of the Poincar\'e duals to the totally geodesic divisors on $\Gam \bs \bbB^n$; see Theorem~\ref{thm:ChernInSpan}. The proof uses Kudla--Millson theory \cite{KM}, following an analogous argument of Bergeron, Li, Millson, and Moeglin for orthogonal Shimura varieties \cite[Cor.~8.4]{Invent}. Thus $c_1(\Gam \bs \bbB^n)$ satisfies the hypotheses of Theorem~\ref{thm:MainRF2}, and Theorem~\ref{thm:MainRF} follows.

The proof of Theorem~\ref{thm:MainRF2} begins by using work of Bergeron, Millson, and Moeglin \cite{Acta} to show that for any class satisfying the hypotheses of the theorem, there is a congruence subgroup $\Gam^\prime \le \Gam$ so that the pullback of $\phi$ to $\Gam^\prime \bs \bbB^n$ is in the image of the cup product from $\bigwedge\nolimits^2 H^1(\Gam^\prime \bs \bbB^n, \bbC)$; this is Theorem~\ref{thm:Cup}. Our previous work \cite[Thm.~5.1]{StoverToledo}, stated in slightly different language as Theorem~\ref{thm:CentralRF1} below, then implies that the associated central extension $\wt{\Gam}$ of $\Gam$ by $\bbZ$ has a two-step nilpotent quotient that is injective on the center of $\wt{\Gam}$. The claims regarding residual finiteness of $\wt{\Gam}$ and $\wt{\Gam}_d$ follow from results in \cite[\S 2]{StoverToledo} that we recall in \S\ref{sub:residualf} below.

\begin{rem}\label{rem:Linear}
As in \cite{StoverToledo}, we also obtain that every extension considered in Theorem~\ref{thm:MainRF2} is linear. Our proofs of residual finiteness proceed by constructing a nilpotent quotient of the extension group that is injective on the center. Using linearity of finitely generated nilpotent groups we easily get linearity of the extension. We state only the weaker results because residual finiteness is the property of interest to us for our applications.
\end{rem}

Taking the special case in Theorem~\ref{thm:MainRF2} where $\phi$ is the Poincar\'e dual to a codimension one totally geodesic subvariety, we have the following.

\begin{cor}\label{cor:ComplementRF}
Let $M = \Gam \bs \bbB^n$ be a closed complex hyperbolic $n$-manifold with $\Gam$ a congruence arithmetic lattice. If $D \subset M$ is a smooth embedded codimension one totally geodesic subvariety, $\calO(D)$ is the line bundle over $M$ associated with $D$, and $\calO(D)^\times \subset \calO(D)$ is the complement of the zero section, then $\pi_1(\calO(D)^\times)$ is residually finite.
\end{cor}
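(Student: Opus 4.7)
The plan is to identify $\pi_1(\calO(D)^\times)$ as one of the central extensions covered by Theorem~\ref{thm:MainRF2} and then apply that theorem directly. First, $\calO(D)^\times$ deformation retracts onto the unit circle bundle $P \to M$ associated with any hermitian metric on the line bundle $\calO(D)$. Since $M = \Gam \bs \bbB^n$ is aspherical (its universal cover is the contractible ball $\bbB^n$), the long exact homotopy sequence of the fibration $S^1 \to P \to M$ collapses to the short exact sequence
$$1 \longrightarrow \bbZ \longrightarrow \pi_1(\calO(D)^\times) \longrightarrow \Gam \longrightarrow 1,$$
which is central because the structure group $S^1$ is path-connected. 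The characteristic class of this extension in $H^2(\Gam, \bbZ) \cong H^2(M, \bbZ)$ is the Euler class of $P$. For the unit circle bundle of a complex line bundle this equals the first Chern class $c_1(\calO(D))$, and for the line bundle of a smooth effective divisor on a closed K\"ahler manifold this agrees with the Poincar\'e dual $[D]^\vee$.

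With this identification in hand, I would then check that the class $\phi := [D]^\vee$ satisfies the hypotheses of Theorem~\ref{thm:MainRF2} for every admissible $n$. When $n \ge 4$ there is nothing to verify. When $n = 3$, the class $[D]^\vee$ is Poincar\'e dual to a complex analytic subvariety and hence lies in $H^{1,1}(M, \bbC) \cap H^2(M, \bbZ)$ by the Lefschetz $(1,1)$ theorem. When $n = 2$, the hypothesis is trivial since by assumption $D$ itself is one of the totally geodesic divisors whose Poincar\'e duals span the required subspace. In each case Theorem~\ref{thm:MainRF2} applies to the extension above and immediately yields residual finiteness of $\pi_1(\calO(D)^\times)$.

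All the serious mathematical content is already packaged in Theorem~\ref{thm:MainRF2}; the role of this corollary is only to provide the topological dictionary between line bundle complements and central extensions. The only steps requiring any care are the identification of the Euler class of $P$ with $c_1(\calO(D))$ and the equality $c_1(\calO(D)) = [D]^\vee$, both of which are standard for smooth effective divisors on closed complex manifolds, so I do not anticipate any genuine obstacle.
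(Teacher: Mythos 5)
Your approach is the paper's own: identify $\pi_1(\calO(D)^\times)$ as the central extension of $\Gam$ by $\bbZ$ with characteristic class $c_1(\calO(D)) = [D]^\vee$ (this is exactly the content of \S\ref{subsec:basic} and Lemma~\ref{lem:compare}), and then invoke Theorem~\ref{thm:MainRF2}. The case analysis for $n\ge 4$, $n=3$, $n=2$ is fine, though a bit redundant: since $D$ is itself a totally geodesic divisor, $[D]^\vee$ lies in $\mathrm{SC}^1(M)$, so hypothesis (2)(b) is satisfied for all $n$, and there is no real need to appeal separately to the $(1,1)$-class criterion for $n=3$. (Also a small attribution nit: that the Poincar\'e dual of a complex-analytic divisor is a $(1,1)$-class is elementary Hodge theory; the Lefschetz $(1,1)$ theorem is the converse statement.)

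There is, however, one step you skipped that is not automatic. Theorem~\ref{thm:MainRF2} requires $\Gam$ to be a congruence arithmetic lattice \emph{of simple type}, whereas the corollary's hypothesis is only that $\Gam$ is a congruence arithmetic lattice. To apply the theorem you must first know that an arithmetic lattice whose ball quotient contains an embedded codimension one totally geodesic subvariety is necessarily of simple type. This is Proposition~\ref{prop:TGSimple}: the arithmetic lattice corresponds to a hermitian form over $\calD^r$ for a division algebra $\calD$ of degree $d$ with $dr=n+1$, and the existence of a codimension one geodesic subvariety forces $d=1$. Without this reduction, Theorem~\ref{thm:MainRF2} does not formally apply, so this lemma is a genuine (if short) missing ingredient in your write-up, even though everything else you say is correct.
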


If $\Gam < \PU(n,1)$ is an arithmetic lattice so that $\Gam \bs \bbB^n$ contains an embedded codimension one totally geodesic submanifold, then $\Gam$ is of simple type. See Proposition~\ref{prop:TGSimple}, which generalizes an argument from work of M\"oller and Toledo \cite{MollerToledo}. All known nonarithmetic ball quotients of dimension $n \ge 2$ are well known to be commensurable with quotients of the ball by \emph{complex reflection groups}, which implies that they contain codimension one totally geodesic subvarieties (e.g., see \cite[Thm.~1.3]{StoverTriangle}, where the proof works verbatim in all dimensions). Thus residual finiteness of $\pi_1(\calO(D)^\times)$ is relevant, and remains open, for nonarithmetic lattices. We note for contrast that arithmetic ball quotients of simple type contain infinitely many distinct immersed codimension one totally geodesic subvarieties, but recent work of Bader--Fisher--Miller--Stover \cite{BFMS2} and, independently, Baldi--Ullmo \cite{BaldiUllmo} proves that nonarithmetic ball quotients only contain finitely many immersed codimension one totally geodesic subvarieties (and, more generally, only finitely many of any positive dimension that are maximal with respect to inclusion).

\medskip

Our final application of Theorem~\ref{thm:MainRF2} is to the existence of covers of ball quotients branched over totally geodesic divisors, answering a question raised to the second author by Gromov over $40$ years ago. Existence of these covers allows us to prove the following in \S\ref{sec:Branched}.

\begin{thm}\label{thm:NegativeExist}
For all $n \ge 2$, there are $n$-dimensional smooth complex projective varieties admitting a K\"ahler metric of strongly negative curvature, thus a Riemannian metric of strictly negative sectional curvature, that are not homotopy equivalent to a locally symmetric manifold.
\end{thm}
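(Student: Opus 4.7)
The plan is to realize the desired varieties as cyclic branched covers of compact ball quotients over smooth embedded totally geodesic divisors, then verify negative curvature of the cover and rule out homotopy equivalence to locally symmetric spaces. The key new ingredient is the residual finiteness in Theorem~\ref{thm:MainRF2}, which first makes it possible to construct the branched covers; this is why the statement is new for $n\ge 4$.

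Fix $n\ge 2$ and choose a torsion-free cocompact congruence arithmetic lattice $\Gam<\PU(n,1)$ of simple type such that $M := \Gam\bs\bbB^n$ contains a smooth embedded codimension-one totally geodesic divisor $D$; such examples come from standard hermitian-form constructions, and Proposition~\ref{prop:TGSimple} ensures that simple type is automatic once a codimension-one totally geodesic divisor is present. A $d$-fold cyclic cover $X \to M$ branched along $D$ exists as a smooth complex manifold precisely when $\calO(D)$ admits a $d$-th root, i.e.\ when $[D]\in H^2(M,\bbZ)$ is divisible by $d$. While this may fail on $M$, applying the ``moreover'' clause of Theorem~\ref{thm:MainRF2} to the reduction $\conj{\phi}\in H^2(\Gam,\bbZ/d)$ of $\phi := [D]$ shows that the central extension $\wt\Gam_d$ of $\Gam$ by $\bbZ/d$ with class $\conj{\phi}$ is residually finite. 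Separating the central $\bbZ/d$ in a finite quotient of $\wt\Gam_d$ produces a finite-index congruence subgroup $\Gam'\le\Gam$ over which the extension splits, so that the pullback of $\phi$ to $M':=\Gam'\bs\bbB^n$ becomes divisible by $d$ in integral cohomology. The resulting $d$-fold cyclic cover $X\to M'$ branched over the preimage $D'\subset M'$ is then a smooth projective variety, being a finite cover of the projective $M'$ branched along a smooth divisor.

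Next, equip $X$ with a K\"ahler metric of strongly negative curvature via the construction of Zheng, building on Mostow--Siu: perturb the pulled-back complex hyperbolic K\"ahler metric in a tubular neighborhood of the ramification divisor to obtain a K\"ahler metric with strongly negative holomorphic sectional curvature in the sense of Siu. In particular, the underlying Riemannian metric has strictly negative sectional curvature, giving the first two conclusions of the theorem.

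Finally, rule out homotopy equivalence with a closed locally symmetric manifold $Y$ as follows. Since $X$ is negatively curved, it is aspherical with word-hyperbolic fundamental group, so $Y$ must have the same properties and hence be a compact quotient of a negatively curved symmetric space of real dimension $2n$. The real, quaternionic, and Cayley hyperbolic cases are excluded by standard comparisons of oriented homotopy invariants (signature and Euler characteristic, constrained by Hirzebruch--Mumford proportionality, together with rigidity for K\"ahler targets). The remaining complex hyperbolic case $Y = \Lam\bs\bbB^n$ is handled again by Hirzebruch--Mumford proportionality: every closed ball quotient satisfies fixed linear relations among its Chern numbers, and a direct computation of the Chern numbers of $X$ from those of $M'$ and $[D']$ via the standard branched-cover formula shows these relations fail for $X$ after an appropriate choice of $\Gam$, $D$, and $d$. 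The main obstacle is the very existence of the branched covers, which is new for $n\ge 4$ and depends crucially on the residual finiteness provided by Theorem~\ref{thm:MainRF2}; the subsequent metric construction and Chern-number computation rely on existing technology.
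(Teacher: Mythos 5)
The construction of the branched covers in the first half matches the paper: you use the residual finiteness of the $\bbZ/d$ central extension (via the ``moreover'' clause of Theorem~\ref{thm:MainRF2}, which is what condition $N$ and Lemma~\ref{lem:behavior} give) to pass to a finite cover $M'$ over which $[D]$ becomes divisible by $d$, and then take the $d$-fold cyclic branched cover; the paper packages this as Proposition~\ref{prop:GoodCover}. The appeal to Zheng's theorem for the strongly negatively curved K\"ahler metric is also the same.

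The genuine gap is in your final step, ruling out a homotopy equivalence $f:Y\to Z$ with a ball quotient $Z=\Lam\bs\bbB^n$. You propose to compute the Chern numbers of $Y$ from those of $M'$ and $[D']$ via the branched-cover formula and to show these violate the Hirzebruch proportionality relations for a ball quotient ``after an appropriate choice of $\Gam$, $D$, and $d$.'' This is exactly the classical approach of Mostow--Siu ($n=2$) and Deraux ($n=3$), and the paper's introduction explicitly flags it: \emph{``This was shown for the previous examples by computing Chern numbers. This becomes delicate in higher dimensions, so we provide a proof of a different kind.''} You neither carry out the computation nor explain why the required choices exist for all $n\ge 4$; but this is precisely the obstacle that makes the statement new in those dimensions, so deferring it to ``an appropriate choice'' leaves the proof incomplete. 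The paper instead applies Siu rigidity to upgrade the homotopy equivalence $f:Y\to Z$ to a biholomorphism, and then derives a contradiction from the normal bundle of a component $D_i$ of the branch locus. By the proportionality principle, a smooth totally geodesic divisor in an $n$-ball quotient always has normal bundle with first Chern class $\nu$ satisfying $n\nu=c_1(D_i)$, a relation depending only on $D_i$. Applied to $D_i\subset Z$ this gives $c_1(N_{D_i/Z})=\nu$, whereas the branched covering forces $c_1(N_{D_i/Y})=d\nu$, and the biholomorphism identifies these, yielding $c_1(D_i)=d\,c_1(D_i)$ with $d>1$, absurd since $c_1(D_i)\ne 0$. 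This argument is uniform in $n$ and requires no Chern number bookkeeping.

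A smaller point: your exclusion of the real, quaternionic, and Cayley hyperbolic cases via ``standard comparisons of... signature and Euler characteristic... together with rigidity for K\"ahler targets'' is vague; the paper invokes the Carlson--Toledo result \cite[Cor.~3.3]{CarlsonToledo} that a compact K\"ahler manifold cannot be homotopy equivalent to a closed locally symmetric space of non-Hermitian type, which cleanly reduces to the ball quotient case without any characteristic number estimates. You also jump from negative curvature to the existence of $Y$ of the same rank without invoking Wolf's theorem or an equivalent, but this is routine.
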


Strongly negative curvature is in the sense defined by Siu \cite[\S 2]{Siu}. Examples for $n=2$ are due to Mostow--Siu \cite{MostowSiu}, and further examples were produced by Zheng \cite{Zheng, Zheng2}. Deraux gave examples for $n = 3$ \cite{Deraux}. We contribute a wealth of new examples for $n=2,3$ and the first examples for $n \ge 4$. Our examples are cyclic covers of ball quotients branched over smooth (but possibly disconnected) totally geodesic divisors. Existence of these covers follows from Corollary~\ref{cor:ComplementRF}. The fact that they admit a strongly negatively curved K\"ahler metric is a theorem of Zheng \cite[Thm.~1]{Zheng} that generalizes the famous work of Mostow and Siu \cite{MostowSiu}.   Zheng notes in \cite[p.~135~\&~151]{Zheng} that existence of branched covers was a primary obstruction to proving Theorem~\ref{thm:NegativeExist} in higher dimensions.

Finally we  must prove that these covers are not homotopy equivalent to a locally symmetric space. 
This was shown for the previous examples by computing Chern numbers.   This becomes delicate in higher dimensions, so we provide a proof of a different kind. Assuming our branched cover is homotopy equivalent  to a locally symmetric manifold, hyperbolicity of the fundamental group, work of Carlson and Toledo \cite{CarlsonToledo}, and Siu rigidity \cite{Siu} allows us to conclude that the manifold is in fact biholomorphic to a ball quotient. We then study the normal bundle to the branch locus to derive a contradiction.

\subsubsection*{Acknowledgments}
Profound thanks are due to Nicolas Bergeron, who explained to us how to apply Kudla--Millson theory and his work to prove the results on cup products necessary to apply our previous methods. In particular, the arguments for Theorem~\ref{thm:ChernInSpan} and Theorem~\ref{thm:Cup} should be attributed to him. We also thank the referee for a number of helpful suggestions that improved the paper. Stover was partially supported by Grant Number DMS-1906088 from the National Science Foundation.

\section{Residual finiteness, central extensions, and\\ branched covers}\label{sec:RFetc}

\subsection{Basic objects}\label{subsec:basic}

We review the connections among the following objects. Suppose:
\begin{enum}
\item $X$ is a smooth aspherical manifold with $\Gamma = \pi_1(X)$;

\item $p: L \to X$ is a complex line bundle over $X$, $L^\times$ is $L$ with its zero section removed, and $\wt{\Gam} = \pi_1(L^\times)$;

\item $s:X\to L$ is a section of $L$ transverse to the zero section. 

\end{enum}
The homotopy sequence of the fibration $p : L^\times \to X$ with fiber $\bbC^\times$ gives a central extension
\[
0 \lra \bbZ \lra \wt{\Gam} \lra \Gam \lra 1
\]
connecting the above fundamental groups. The zero set
\[
Z(s) = \{x\in X: s(x) = 0\}
\]
of $s$ is a smooth real codimension two submanifold of $X$ with normal bundle isomorphic to $L|_{Z(s)}$, where $L$ is viewed as an $\bbR^2$-bundle.

\subsubsection{Group of central extensions }
\label{subsub:extensions}

It is well known that for a given group $\Gam$ and fixed Abelian group $A$ the isomorphism classes of central extensions
\[
E: \, 0 \lra A \lra \wt{\Gam} \lra \Gam \lra 1
\]
form a group in natural bijection with $H^2(\Gamma, A)$, where the correspondence is given by the characteristic class $\chi(E)$ of the extension $E$; e.g., see \cite[\S VI.3]{Brown}.  Briefly, $\chi(E)$ is represented by the following Eilenberg--MacLane cocycle $c$ whose cohomology class uniquely determines $E$:  choose a set theoretic section $s:\Gam \to \wt{\Gam}$, let $c(\gam_1,\gam_2) = s(\gam_2)s(\gam_1 \gam_2)^{-1}s(\gam_1).$ This is equivalent to writing $\wt{\Gam}$ as the product $\Gam \times A$ with the multiplication
\[
(\gam_1, a_1)(\gam_2,a_2) = (\gam_1 \gam_2 , a_1 + a_2 + c(\gam_1,\gam_2)).
\]
This correspondence is functorial in both of the variables $\Gam$ and $A$ and the resulting operations on extensions are best defined in terms of $\chi(E)$.  Two common examples: given an injection ${\iota : \Lam\to \Gam}$, then the restriction $\iota^*(E)$ of $E$ to $\Lam$ has characteristic class $\iota^*(\chi(E))$. Similarly, if $p:A\to B$ is surjective and $p_*(E)$ denotes the extension of $\Gam$ obtained by dividing $A$ and $\wt{\Gam}$ by the kernel of $p$, then $\chi(p_*(E)) = p_*(\chi(E))$.

\begin{rem}
\label{rem:ext}
Note that the word \lq\lq extension" is used in two (closely related) meanings: the exact sequence $E$ above, or just the middle group $\wt{\Gam}$. We will refer to $\wt{\Gam}$ as \emph{the group of the extension $E$}. Given $\iota : \Lam \to \Gam$ as above, the group $\wt{\Lam}$ of the extension $\iota^*(E)$ is then the preimage of $\Lam$ in $\wt{\Gam}$ for the natural projection $\wt{\Gam} \to \Gam$.
\end{rem}

\subsubsection{Group of line bundles }
\label{subsub:linebdles}

It is also well known that given a space $X$, the isomorphism classes of complex line bundles over $X$ form a group under tensor product that is isomorphic to $H^2(X,\bbZ)$, with isomorphism given by the Chern class $c_1(L)$. The isomorphism is natural with respect to smooth maps $f:Y\to X$, namely $c_1(f^*L) = f^*(c_1(L))$.

\subsubsection{Comparison of characteristic classes}
\label{subsub:comparison}

Suppose that we are in the above situation: $X$ is a smooth aspherical manifold and $\Gam =\pi_1(X)$, hence $H^*(X) \cong H^*(\Gam)$ (meaning that there is an isomorphism of cohomology with any coefficients). In order to compare the two characteristic classes defined above, we need a functorial isomorphism between the two cohomologies. More precisely, we need a natural isomorphism between cohomology of the complex $C^*(\Gam,\bbZ)$ of Eilenberg--MacLane cochains on $\Gam$ and the cohomology of a topologically defined complex $C^*(X,\bbZ)$ of cochains on $X$, where one could choose either singular or \v{C}ech cochains on $X$. There is a well known method for establishing a natural correspondence between the cohomologies of these two complexes by embedding both in the bicomplex $\bigoplus_{p,q}C^p (\Gam,C^q(\wt{X},\bbZ))$ of Eilenberg--MacLane cochains on $\Gam$ with coefficients in the (singular or \v{C}ech) cochains on the universal cover $\wt{X}$ with its action of $\Gam$. Standard arguments give natural isomorphisms from the two cohomologies in question to that of the total complex of the bicomplex. The isomorphism thus obtained will give a functorial isomorphism between $H^2(\Gam,\bbZ)$ and $H^2(X,\bbZ)$.

Thus, in our situation of a complex line bundle $L$ over a smooth aspherical manifold $X$ we have three cohomology classes in $H^2(X,\bbZ)\cong H^2(\Gam,\bbZ)$:
\begin{enum}

\item The characteristic class $\chi(E)\in H^2(\Gam,\bbZ)$ of the extension
\begin{equation}\label{eq:EZ}
E: \, 0 \lra \bbZ \lra \wt{\Gam} \lra \Gam \lra 1,
\end{equation}
where $\wt{\Gam} = \pi_1(L^\times)$.

\item The Chern class $c_1(L)\in H^2(X,\bbZ)$.

\item The Poincar\'e dual $[Z(s)]^\vee$ of the homology class $[Z(s)]\in H_{n-2}(X,\bbZ)$ of the zero set $Z(s)$ of the section $s$.

\end{enum}

\begin{lem}
\label{lem:compare}
The three cohomology classes defined in 1\,-\,3 above are equal, (except possibly up to a factor of $\pm 1$ arising from sign conventions).
\end{lem}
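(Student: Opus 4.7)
My plan is to prove the two equalities $c_1(L)=[Z(s)]^\vee$ and $c_1(L)=\chi(E)$, from which the three-way comparison follows. Both equalities are classical in isolation, so the substantive task is to verify compatibility with the bicomplex isomorphism $H^2(X,\bbZ)\cong H^2(\Gam,\bbZ)$ fixed in \S\ref{subsub:comparison} and to absorb the expected sign ambiguities into orientation choices for $\bbC$, the Thom class, and Poincar\'e duality.

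For the identification $c_1(L)=[Z(s)]^\vee$, I would fix a Hermitian metric on $L$ and consider the Thom class $\tau\in H^2(L,L^\times;\bbZ)$ of the associated oriented rank-$2$ real bundle. By definition, the image of $\tau$ under the zero section is $c_1(L)$, and since the space of sections of $L$ is affine, $s$ is homotopic to the zero section through sections, so $s^*\tau=c_1(L)$ as well. On the other hand, transversality of $s$ identifies a tubular neighborhood of $Z(s)$ with the total space of $L|_{Z(s)}$, and an excision / Thom isomorphism argument then identifies $s^*\tau$ with the Poincar\'e dual $[Z(s)]^\vee$.

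For the identification $\chi(E)=c_1(L)$, I would replace $L^\times$ by the unit circle bundle $\pi:S(L)\to X$; the inclusion $S(L)\hookrightarrow L^\times$ is a homotopy equivalence, so $\pi_1(S(L))\cong \wt{\Gam}$ and the induced central extension is again $E$. Because $X$ is aspherical and $S^1\simeq K(\bbZ,1)$, the total space $S(L)$ is also aspherical, and the classifying map $f:X\to BS^1=K(\bbZ,2)$ of the principal $S^1$-bundle $\pi$ simultaneously represents $c_1(L)$ (pulling back a universal generator of $H^2(K(\bbZ,2),\bbZ)$) and the $k$-invariant of the fibration $S^1\to S(L)\to X$. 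Unwinding the latter in the Serre spectral sequence of $\pi$, the transgression of a generator of $H^1(S^1)$ equals $c_1(L)$; the same transgression, viewed in the bicomplex $C^p(\Gam,C^q(\wt X,\bbZ))$ of \S\ref{subsub:comparison}, produces the Eilenberg--MacLane $2$-cocycle representing $\chi(E)$. Hence the two classes agree under the natural isomorphism $H^2(X,\bbZ)\cong H^2(\Gam,\bbZ)$.

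The main obstacle will be bookkeeping with signs: orientation conventions for $\bbC$, for the transgression in the Serre spectral sequence, for the Thom class, and for Poincar\'e duality each introduce a possible sign, and reconciling them is the only tedious part of the argument; this is precisely why the lemma allows a factor of $\pm 1$. The rest is essentially formal once one commits to the bicomplex model for the isomorphism $H^2(X,\bbZ)\cong H^2(\Gam,\bbZ)$, since all three classes are then visibly obtained by pulling back a universal class on $K(\bbZ,2)$ along a single homotopy class of maps $X\to K(\bbZ,2)$.
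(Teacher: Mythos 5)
The paper gives no proof of Lemma~\ref{lem:compare}: it is stated as a well-known fact, with \S\ref{subsub:comparison} only fixing the functorial isomorphism $H^2(X,\bbZ)\cong H^2(\Gam,\bbZ)$ against which the equality is to be read, so there is no internal argument to compare against. Your outline supplies the standard one and is correct in strategy: the Thom-class argument for $c_1(L)=[Z(s)]^\vee$ and the circle-bundle / classifying-map argument for $c_1(L)=\chi(E)$ are both textbook, and the $\pm 1$ caveat is invoked in the right places.

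One step, though, is named in a confusing and effectively circular way. You say that the Serre transgression of the fibration $S^1\to S(L)\to X$, ``viewed in the bicomplex $C^p(\Gam,C^q(\wt{X},\bbZ))$,'' produces the cocycle for $\chi(E)$. That bicomplex is the device for identifying $H^*(X,\bbZ)$ with $H^*(\Gam,\bbZ)$; it is not a model of the circle bundle, so there is no transgression to ``view'' in it, and as written the sentence simply restates the matching you are trying to prove. What is actually needed is a comparison between the Serre spectral sequence of the fibration $S^1\to S(L)\to X$ and the Lyndon--Hochschild--Serre spectral sequence of the extension $0\to\bbZ\to\wt{\Gam}\to\Gam\to 1$, available here precisely because both base and total space are aspherical; one then invokes the standard fact that the LHS differential $d_2$ sends the identity generator of $E_2^{0,1}=\Hom(\bbZ,\bbZ)$ to $\pm\chi(E)$. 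Alternatively, and closer to your closing remark, one can avoid spectral sequences entirely by observing that both $c_1(L)$ and $\chi(E)$ are the primary obstruction to a section of $S(L)\to X$, since a section is the same as a homomorphic splitting of the extension when base and total space are aspherical, so both classes are pulled back from the universal generator along one and the same classifying map $X\to K(\bbZ,2)$. Either repair is routine; the plan itself is sound.
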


\begin{proof}
The equivalence of (2) and (3) is standard; see, for example, \cite[Prop.~6.24]{BottTu} or \cite[Ch.~1, Prop.~1]{GriffithsHarris}. The proof of the equivalence of (1) and (2) should also be standard. Since we could not find a reference we give a quick sketch.

We start from the fact that the group-cochain analogue of the standard \v{C}ech cochain representative of $c_1(L)$ gives its representative in group cohomology. Namely, let $p:\wt{X}\to X$ be the universal cover, with $\Gam$ acting on $\wt{X}$ by covering transformations. Since $p^*L^\times$ is trivial, it has a nowhere vanishing section $s$. For each $\gam\in\Gam$ define a function $a_\gam:\wt{X}\to \bbC^\times$ by $\gam^* s = a_\gam s$. There exist functions $\ell_\gam:\wt{X}\to \bbC$, unique up to integral additive constant, such that $\exp(2\pi i \ell_\gam) = a_\gam$, and these define $\ell \in C^1(\Gam, C^1(\wt{X}, \bbZ))$ as above. Then the Eilenberg--MacLane coboundary $\del\ell$ given by
\[
\del\ell(\gam_1,\gam_2) = \ell(\gam_2) - \ell(\gam_1\gam_2) + \gam_2^*\ell(\gam_1)
\]
is a $\bbZ$-valued cocycle representing $c_1(L)\in H^2(\Gam,\bbZ)\cong H^2(X,\bbZ)$.
 
To show the equivalence of (1) and (2) it suffices to show that $\del \ell$ is a cocycle for the extension in Equation \eqref{eq:EZ}.  To do this we represent $\wt{\Gam}$ as the Deck transformation group of the universal cover $\wt{L}^\times\cong \wt{X}\times \bbC$. Writing $\wt{\Gam}$ set-theoretically as $\Gam\times \bbZ$, we see that $(\gam,n)\in \Gam\times \bbZ$ acts on $(x,t)\in \wt{X}\times \bbC$ by
\[
(\gam,n)(x,t) =( \gam x , t + n + \ell_\gam (x)).
\]
From this it is straightforward to deduce that the cocycle giving the set $\Gam\times \bbZ$ the group structure of $\wt{\Gam}$ is $\del\ell$.
\end{proof}

\subsubsection{The same considerations over projective varieties}
\label{subsub:projective}

Now suppose, in addition to the above assumptions, that $X$ is a smooth projective variety. Let $D = D_1 \cup \dots \cup D_k$, where $D_1,\dots ,D_k$ are smooth and connected codimension one subvarieties of $X$ that are pairwise disjoint, i.e., $D_i \cap D_j = \nil$ if $i\neq j$. We will not distinguish the smooth codimension one subvariety $D$ from the smooth divisor $ D = D_1 + \dots + D_k$.

\begin{defn}
\label{def:goodpair}
A pair $(X,D)$ consisting of a smooth projective variety $X$ and a divisor $D$ satisfying the above conditions will be called a \emph{good pair}.
\end{defn}

We can assign a holomorphic line bundle $\calO(D) \to X$ to $D$ with the property that it has a holomorphic section $s$ with zero set $Z(s) = D$ and vanishing to first order on $D$, meaning that, in local coordinates on $X$ and $\calO(D)$, the differential $d_x s$ is nonzero for any $x\in D$. These properties define $\calO(D)$ uniquely, and its section $s$ uniquely up to a nonzero multiplicative constant, i.e., we can replace $s$ by $\lam s$ for any $\lam\in\bbC^\times$. Transversality of $s$ implies that $\calO(D)|_D$ is isomorphic to the normal bundle $N(D)$ of $D$ in $X$.

Let $d>1$ be an integer and suppose that $\calO(D)$ is divisible by $d$ in the group of holomorphic line bundles on $X$. That is, suppose that there is a holomorphic line bundle $p:L \to X$ with $L^{\otimes^d}\cong \calO(D)$. We have a natural $d^{th}$ power map $L\to L^{\otimes^d} $ defined by
\[
v \longmapsto v^d = \overset{d~\textrm{factors}}{\overbrace{v \otimes \dots \otimes v}}.
\]

\begin{defn}
\label{def:branchedcov}
Suppose $(X,D)$ is a good pair and $p:L \to X$ a line bundle with $L^{\otimes^d} \cong \calO(D)$ as above. Then the space $Y$ defined by
\[
Y = \left\{ v\in L: v^d = s(p(v))\right\}
\]
will be called a \emph{cyclic $d$-fold branched cover of $X$ branched along $D$}.
\end{defn}

More precisely, $Y$ could be called the cyclic branched cover associated with $L$. Note the following properties of $Y$:

\begin{lem}
\label{lem:propsbranch}
Given a good pair $(X, D)$ and $L$ a line bundle so $L^{\otimes^d} \cong \calO(D)$, the cyclic branched cover $Y$ branched along $D$ satisfies:
\begin{enum}
\item As a subset of $L$, $Y$ is a smooth projective subvariety stable under the multiplication maps $v\mapsto \zeta v$ where $\zeta$ is a primitive $d^{th}$ root of unity, which define an action of the cyclic group $\bbZ/d$ on $Y$ with quotient $X$.

\item The projection $p:Y\to X$ is $d$-to-$1$ over $X \ssm D$ and $1$-to-$1$ over $D$.

\item If $\bfO$ denotes the zero section of $L$, then
\[
p|_{Y\ssm\bfO} : Y\ssm \bfO \to X \ssm D
\]
is a $\bbZ/d$ covering space.

\end{enum}

\end{lem}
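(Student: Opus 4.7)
The plan is to work in simultaneous local trivializations of $L$ and $\calO(D)$ over an open cover $\{U_\al\}$ of $X$, chosen so that on each $U_\al$ the section $s$ is represented by a holomorphic function $f_\al : U_\al \to \bbC$ with $Z(f_\al) = D \cap U_\al$ and $df_\al \ne 0$ along $D\cap U_\al$; this transversality is exactly the first-order vanishing built into the definition of a good pair. Using the isomorphism $L^{\otimes d} \cong \calO(D)$, I can arrange that the fiber coordinate $v$ on $L|_{U_\al}$ and the fiber coordinate $w$ on $\calO(D)|_{U_\al}$ satisfy $w = v^d$, so that the defining equation $v^d = s(p(v))$ becomes the local equation $g_\al(x,v) := v^d - f_\al(x) = 0$ on $U_\al \times \bbC$.

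For part (1), smoothness is checked via the Jacobian criterion applied to $g_\al$. At $(x_0,v_0) \in Y$ with $v_0 \ne 0$, $\partial g_\al / \partial v = d v_0^{d-1} \ne 0$. At $(x_0,0)\in Y$ (so $x_0 \in D$, i.e.\ $f_\al(x_0)=0$), the transversality hypothesis gives $df_\al(x_0) \ne 0$, so some $\partial g_\al/\partial x_i$ is nonzero. Hence $dg_\al \ne 0$ on $Y$, and $Y$ is a smooth complex submanifold of $L$. For projectivity, $p : Y \to X$ is finite, and $Y$ embeds as a closed subvariety of the projective completion $\bbP(L \oplus \calO) \to X$, which is projective since $X$ is. The action $v \mapsto \zeta v$ preserves $Y$ because $(\zeta v)^d = v^d = s(p(v)) = s(p(\zeta v))$, and the quotient map is well defined and factors through $p$, giving $Y/(\bbZ/d) \cong X$.

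Parts (2) and (3) fall out of the same local model. Over $x \in X \ssm D$ we have $f_\al(x) \ne 0$, and $v^d = f_\al(x)$ has exactly $d$ distinct solutions; over $x \in D$ we have $f_\al(x) = 0$, so $v = 0$ is the unique solution. This is (2). For (3), removing $\bfO$ removes precisely the ramification, and on $Y \ssm \bfO$ the cyclic action is free (since $v \ne 0$ forces $\zeta v \ne v$ for $\zeta$ a nontrivial $d^{th}$ root of unity). Combined with the smoothness and the fact that $p|_{Y \ssm \bfO}$ is a finite proper map with $d$ geometric fibers, this gives a principal $\bbZ/d$ covering $Y \ssm \bfO \to X \ssm D$.

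The only step that genuinely uses the hypotheses in a nontrivial way is the smoothness of $Y$ at the ramification locus $Y \cap \bfO$; this is where both the smoothness of $D$ and the first-order vanishing of $s$ on $D$ are essential, since without them the local equation $v^d = f_\al(x)$ would acquire singularities along $D$. The remaining verifications are routine local computations with the model $v^d = f_\al(x)$.
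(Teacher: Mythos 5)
Your proof is correct and follows essentially the same route as the paper's: the paper also argues smoothness via local equations, deduces projectivity from quasiprojectivity of $L$ together with finiteness of $Y\to X$, and observes that the $\bbZ/d$-action is free away from the zero section and fixes it pointwise. You have simply written out the local computation $v^d = f_\al(x)$ and the Jacobian check that the paper leaves implicit (``it is clear that $Y$ is a smooth subvariety'').
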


\begin{proof}
By writing local equations for $Y$ at each point $y\in Y\subset L$ it is clear that $Y$ is a smooth subvariety. Since $L$ is a quasiprojective variety, so is $Y$, and $Y$ is finite over $X$, hence compact, so $Y$ is projective. The defining equation of $Y$ is invariant under the $\bbZ/d$-action, which is free on $L\ssm \bfO$ and fixes $\bfO$ point-wise. This makes the remaining statements clear.
\end{proof} 

For the existence of branched covers as in Definition \ref{def:branchedcov} we have:

\begin{prop}
\label{prop:existroots}
Suppose $(X,D)$ is a good pair and $d > 1$ is an integer. With notation as in \S\ref{subsub:extensions}, the following are equivalent:
\begin{enum}

\item There exists a holomorphic line bundle $p:L\to X$ with $L^{\otimes^d}\cong \calO(D)$.

\item There exists $\alpha\in H^2(X,\bbZ)$ with $c_1(\calO(D)) = d\cdot \al$.

\item There exits $\gam\in H^2(\Gam,\bbZ)$ with $\chi(E) = d\cdot \gam$, where $\Gam =\pi_1(X)$, ${\wt{\Gam} = \pi_1(\calO(D)^\times)}$, and $E$ is the extension in Equation~\eqref{eq:EZ}.

\item There exists $z\in H_{2n-2}(X,\bbZ)$ with $d\cdot z = [D]$, where $n = \dim_\bbC(X)$ and $[D]$ is the homology class of $D$.

\item The class $(r_d)_* (c_1(\calO(D))) \in H^2(X,\bbZ/d)$ vanishes, where
\[
(r_d)_*:H^2(X,\bbZ)\lra H^2(X,\bbZ/d)
\]
is the map induced by the coefficient homomorphism ${r_d:\bbZ\to\bbZ/d}$.

\item The class $\chi((r_d)_*(E))\in H^2(\Gamma,\bbZ/d)$ vanishes, where $(r_d)_*(E)$ is the extension
\[
0\lra \bbZ/d \lra \wt{\Gam}_d \lra \Gam\lra 1
\]
for $\wt{\Gam}_d = \wt{\Gam} / d\bbZ$ obtained from $E$ by dividing by the kernel of $r_d$.

\end{enum}
If any one of these conditions holds, then there exists a cyclic $d$-fold branched cover $Y\to X$ branched along $D$ (associated with $L$) as in Definition \ref{def:branchedcov}.
\end{prop}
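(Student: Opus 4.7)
The plan is to show that conditions (2) through (6) are equivalent almost formally, and that the only genuinely substantive equivalence is (1) $\Leftrightarrow$ (2), where one must upgrade a topological statement about integral cohomology to a holomorphic statement about line bundles. The concluding assertion about existence of $Y$ is then immediate from Definition~\ref{def:branchedcov} once (1) is in hand.

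First I would dispose of the purely cohomological equivalences. Lemma~\ref{lem:compare} identifies (up to sign) the three classes $c_1(\calO(D))$, $\chi(E)$, and $[D]^\vee$ in $H^2(X,\bbZ) \cong H^2(\Gam,\bbZ)$. Using Poincar\'e duality $H^2(X,\bbZ) \cong H_{2n-2}(X,\bbZ)$, divisibility of $c_1(\calO(D))$ by $d$ in $H^2(X,\bbZ)$ is equivalent to divisibility of $[D]$ by $d$ in $H_{2n-2}(X,\bbZ)$. This already gives (2) $\Leftrightarrow$ (3) $\Leftrightarrow$ (4). For (2) $\Leftrightarrow$ (5) and (3) $\Leftrightarrow$ (6) I would invoke the Bockstein long exact sequence arising from
\[
0 \lra \bbZ \overset{d}{\lra} \bbZ \overset{r_d}{\lra} \bbZ/d \lra 0,
\]
which yields for any space or group that a class in $H^2(-,\bbZ)$ is divisible by $d$ if and only if its reduction in $H^2(-,\bbZ/d)$ vanishes. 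Naturality of the coefficient sequence with respect to the characteristic-class--to--Chern-class correspondence (again via Lemma~\ref{lem:compare}) gives (6) as the group-cohomological counterpart of (5).

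The main obstacle, and the heart of the argument, is (2) $\Rightarrow$ (1), since (1) $\Rightarrow$ (2) is simply $\al := c_1(L)$. For this I would use the exponential sequence $0 \to \bbZ \to \calO_X \to \calO_X^\times \to 0$, which identifies $\Pic(X)$ with the kernel of $c_1 : H^1(X,\calO_X^\times) \to H^2(X,\bbZ)$ followed by its further map into $H^2(X,\calO_X)$. Since $H^2(X,\calO_X)$ is a $\bbC$-vector space, hence torsion-free and divisible, the identity $d\,\al = c_1(\calO(D))$ forces the image of $\al$ in $H^2(X,\calO_X)$ to be zero, so $\al$ lifts to some holomorphic $L_0 \in \Pic(X)$. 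Then $L_0^{\otimes d}$ and $\calO(D)$ have the same Chern class, so they differ by an element $M \in \Pic^0(X)$. Because $\Pic^0(X)$ is a complex torus, it is divisible, so I can choose $N \in \Pic^0(X)$ with $N^{\otimes d} \cong M$ and set $L := L_0 \otimes N^{-1}$, giving $L^{\otimes d} \cong \calO(D)$ holomorphically.

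Finally, assuming (1), the subvariety $Y = \{v \in L : v^d = s(p(v))\}$ of Definition~\ref{def:branchedcov} furnishes the desired cyclic $d$-fold branched cover, and Lemma~\ref{lem:propsbranch} records its structural properties. The only real work, then, is the exponential sequence argument and the use of divisibility of $\Pic^0(X)$; everything else is bookkeeping about coefficient sequences and the three-way identification of characteristic classes provided by Lemma~\ref{lem:compare}.
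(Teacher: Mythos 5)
Your proof is correct and follows essentially the same route as the paper: dispose of the equivalences among (2)--(6) by Lemma~\ref{lem:compare}, Poincar\'e duality, and the Bockstein sequence for $0 \to \bbZ \xrightarrow{d} \bbZ \to \bbZ/d \to 0$, then treat (2)~$\Rightarrow$~(1) as the only substantive step. The one difference is that the paper simply cites Hirzebruch--Ramanujam for (2)~$\Rightarrow$~(1), whereas you reconstruct that argument in full (lifting $\alpha$ to $\Pic(X)$ because $H^2(X,\calO_X)$ is a torsion-free $\bbC$-vector space, then correcting by an element of the divisible group $\Pic^0(X)$); your reconstruction is the standard argument and is correct.
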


\begin{proof}
Clearly (1) implies (2) with $\alpha = c_1(L)$, and (2) and (4) are equivalent by Poincar\'e duality and Lemma~\ref{lem:compare}. Also (2) and (5) are seen to be equivalent by looking at the Bockstein long exact sequence associated with the coefficient sequence $0\to \bbZ \to \bbZ\to \bbZ/d\to 0$. The equivalence of (2) and (3), and similarly (5) and (6), follow from Lemma \ref{lem:compare}. The only subtle point is to prove that (2) implies (1). We refer to \cite[\S 7]{HirzebruchRam} for a simple proof based on the exponential sequence $0\to\bbZ\to \calO \to \calO^*\to 1$.
\end{proof}

\subsection{Residual Finiteness}
\label{sub:residualf}

Recall that a finitely generated group $\Gam$ is \emph{residually finite} if, for every nontrivial $\gam \in \Gam$, there exists a homomorphism $\rho : \Gam \to F$ from $\Gam$ to a finite group $F$ so that $\rho(\gam)$ is nontrivial. This property is a commensurability invariant of groups. In particular, if $\Lam\le \Gam$ is a subgroup of finite index, then $\Lam$ is residually finite if and only if $\Gam$ is residually finite.

We will be interested in residual finiteness for the groups $\wt{\Gam}$ of central extensions
$0\to \bbZ\to \wt{\Gam} \to\Gam\to 1$ of residually finite groups $\Gam$ by $\bbZ$. Throughout this section $q$ will denote the projection $\wt{\Gam}\to\Gam$ and $\sig$ will be the image of $1$ under the injection $\bbZ\to\wt{\Gam}$, hence the infinite cyclic group $\langle \sig \rangle$ is the kernel of $q$.

Since $\Gam$ is residually finite, to prove that $\wt{\Gam}$ is residually finite it suffices to prove that for every $i\ne 0$ there exists a homomorphism from $\wt{\Gam}$ to some finite group that is nontrivial on $\sig^i$. Indeed, for any $\gam\in\wt{\Gam}$ not in $\langle \sig \rangle$, $q(\gam)$ is nontrivial and, by assumption, there exists a homomorphism $\rho:\Gam\to F$ for some finite group $F$ with $\rho(q(\gam))$ nontrivial. Thus $\rho\circ q:\wt{\Gam}\to F$ is a homomorphism that is nontrivial on $\gam$. Observe that a slight extension of this reasoning gives:

\begin{lem}[Lem.~2.4 \cite{StoverToledo}]
\label{lem:rfquotient}
Let $E$ be a central extension of a residually finite group $\Gam$ by $\bbZ$ with group $\wt{\Gam}$ as above, and let $\sig\in\wt{\Gam}$ be the image of $1\in\bbZ$. If there is a homomorphism $\phi:\wt{\Gam}\to H$ where $H$ is residually finite and $\phi$ is injective on $\langle\sig\rangle$, then $\wt{\Gam}$ is residually finite.
\end{lem}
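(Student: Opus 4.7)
The plan is to use the reduction flagged in the paragraph immediately preceding the lemma. Since $\Gam$ is residually finite and $\langle\sig\rangle$ is exactly the kernel of the quotient map $q : \wt{\Gam} \to \Gam$, it suffices to construct, for every integer $i \neq 0$, a homomorphism from $\wt{\Gam}$ to some finite group that is nontrivial on $\sig^i$. Indeed, any nontrivial $\wt{\gam} \in \wt{\Gam}$ that does not lie in $\langle\sig\rangle$ projects to a nontrivial element of $\Gam$, which is separated from the identity by some finite quotient $\rho: \Gam \to F$, and then $\rho \circ q : \wt{\Gam} \to F$ separates $\wt{\gam}$ itself.

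With this reduction in hand, I would fix $i \neq 0$ and argue as follows. By hypothesis $\phi$ is injective on $\langle\sig\rangle$, so $\phi(\sig^i)$ is a nontrivial element of $H$. Since $H$ is residually finite, there exists a homomorphism $\psi : H \to F$ to some finite group $F$ with $\psi(\phi(\sig^i)) \neq 1_F$. The composition $\psi \circ \phi : \wt{\Gam} \to F$ is then a homomorphism to a finite group whose value at $\sig^i$ is nontrivial. Doing this for each $i \neq 0$, together with the previous observation for elements outside $\langle\sig\rangle$, provides the separating homomorphisms needed to conclude residual finiteness of $\wt{\Gam}$.

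There is no real obstacle in the proof; the lemma is essentially a bookkeeping statement that isolates the only genuine difficulty in proving residual finiteness of a central $\bbZ$-extension of a residually finite group, namely separating nontrivial powers of the central generator $\sig$ from the identity. Its content is that it is enough to produce any homomorphism into a residually finite target whose kernel avoids $\langle\sig\rangle$. The utility of packaging the statement this way will be apparent later, when such a $\phi$ is constructed by passing to a finitely generated nilpotent quotient of $\wt{\Gam}$ (automatically residually finite by Mal'cev) that remains injective on the center.
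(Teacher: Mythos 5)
Your proof is correct and matches the paper's intended argument exactly: the paper explicitly sets up the reduction (to separating powers of $\sig$) in the paragraph preceding the lemma and then remarks that "a slight extension of this reasoning gives" the lemma, which is precisely the composition $\psi \circ \phi$ step you supply. No issues.
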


For $d\in\bbZ$, $d>1$, we will also be interested in the reductions modulo $d$ of the above extensions, namely:
\begin{equation}\label{eq:Ed}
0\lra \bbZ/d\lra\wt{\Gam}_d\lra\Gam\lra 1,
\end{equation}
where $\wt{\Gam}_d = \wt{\Gam}/d\bbZ$. Similar considerations apply with regard to residual finiteness of $\wt{\Gam}_d$. The following definition will turn out to be very useful:

\begin{defn}
\label{def:conditionn}
Let $\Gam$ be a residually finite group, $E$ be an extension of $\Gam$ by $\bbZ$ with group $\wt{\Gam}$ as in Equation~\eqref{eq:EZ}, and let $\sig\in\wt{\Gam}$ be the image of $1\in\bbZ$. We say that \emph{$E$ satisfies condition $N$} if there exists a nilpotent quotient $\calN$ of $\wt{\Gam}$ under which the image of $\sig$ has infinite order. For a positive integer $k$ we say that \emph{$E$ satisfies condition $N_k$} if there is a nilpotent quotient $\calN$ of $\wt{\Gam}$ with step size at most $k$ in which the image of $\sig$ has infinite order.
\end{defn}

\begin{lem}
\label{lem:condn}
Let $E$ be a central extension of a residually finite group $\Gam$ by $\bbZ$ with group $\wt{\Gam}$. If $E$ satisfies condition $N$, then $\wt{\Gam}$ is residually finite.
\end{lem}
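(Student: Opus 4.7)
The plan is to invoke Lemma~\ref{lem:rfquotient} with $H = \calN$, where $\calN$ is the nilpotent quotient of $\wt{\Gam}$ provided by condition $N$. Concretely, let $\phi : \wt{\Gam} \to \calN$ be the quotient map; by hypothesis $\phi(\sig)$ has infinite order in $\calN$.

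First I would verify the two hypotheses of Lemma~\ref{lem:rfquotient}. Injectivity of $\phi$ on $\langle \sig \rangle$ is immediate: since $\phi(\sig)$ has infinite order, $\phi(\sig^i)$ is nontrivial for every $i \ne 0$, so $\phi$ restricts to an embedding of $\langle \sig \rangle$ into $\calN$. Next I would check that $\calN$ itself is residually finite. Note that $\Gam$ is finitely generated (this is built into the notion of residual finiteness used in the paper), and since $\wt{\Gam}$ is a central extension of a finitely generated group by the cyclic group $\bbZ$, the group $\wt{\Gam}$ is finitely generated as well. Consequently $\calN$, being a quotient of $\wt{\Gam}$, is a finitely generated nilpotent group, and any such group is residually finite by the classical theorem of Hirsch (or of Mal'cev).

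With both hypotheses of Lemma~\ref{lem:rfquotient} in hand, that lemma immediately yields residual finiteness of $\wt{\Gam}$, which is the conclusion of the lemma.

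There is essentially no obstacle here: the entire content has been front-loaded into Definition~\ref{def:conditionn} and Lemma~\ref{lem:rfquotient}. The only step that requires outside input is the residual finiteness of finitely generated nilpotent groups, for which I would simply cite Hirsch's theorem. The genuine work in this framework lies elsewhere in the paper, namely in verifying condition $N$ for the extensions of interest (via the construction of a two-step nilpotent quotient coming from cup products on $H^1(\Gam' \bs \bbB^n, \bbC)$).
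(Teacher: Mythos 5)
Your proposal is correct and follows exactly the same route as the paper: the paper's proof is the one-line statement that this is an immediate consequence of Lemma~\ref{lem:rfquotient} together with residual finiteness of finitely generated nilpotent groups, and your write-up simply spells out the verification of the two hypotheses of that lemma (injectivity of the quotient map on $\langle\sig\rangle$, and residual finiteness of $\calN$ via Hirsch/Mal'cev after noting $\wt{\Gam}$ is finitely generated). No discrepancies.
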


\begin{proof}
Since finitely generated nilpotent groups are residually finite, this is an immediate consequence of Lemma \ref{lem:rfquotient}.
\end{proof}

First we need to know how residual finiteness behaves under the two basic operations of restriction (or pullback) and reduction (push-forward) of extensions defined in \S\ref{subsub:extensions}.

\begin{lem}
\label{lem:behavior}
Let $\Gam$ be a residually finite group and $E$ denote the central extension $ 0 \to \bbZ \to \wt{\Gam}\to\Gam\to 1$. If $d$ is an integer $>1$, let $r_d:\bbZ\to \bbZ/d$ denote reduction mod $d$ and let $E_d$ denote the central extension $(r_d)_*(E)$.
\begin{enum}

\item Let $\Lam\le\Gam$ be a subgroup of finite index and $\iota:\Lam \to\Gam$ be the inclusion map. If $\wt{\Lam}$ denotes the group of the central extension $\iota^*(E)$, then $\wt{\Lam}$ is residually finite if and only if $\wt{\Gam}$ is residually finite.

\item The group $\wt{\Gam}_d = \wt{\Gam}/ (d\bbZ)$ is residually finite for infinitely many $d>1$ if and only if  $\wt{\Gam}$ is residually finite.

\item Suppose $E$ satisfies condition N of Definition \ref{def:conditionn}. Then the group $\wt{\Gam}_d$ of $(r_d)_*(E)$ is residually finite for all $d \ge 1$. 
\end{enum} 

\end{lem}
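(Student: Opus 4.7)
The plan is to address the three parts in turn, with the substantive work lying in part (3).

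For part (1), the preimage $\wt{\Lam} = q^{-1}(\Lam)$ satisfies $[\wt{\Gam} : \wt{\Lam}] = [\Gam : \Lam] < \infty$, since the natural map $\wt{\Gam}/\wt{\Lam} \to \Gam/\Lam$ is a bijection. The conclusion is then immediate from the fact, recalled at the start of \S\ref{sub:residualf}, that residual finiteness is a commensurability invariant.

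For part (2), I would argue by cases on a nontrivial $\gam \in \wt{\Gam}$. If $q(\gam) \ne 1$ in $\Gam$, residual finiteness of $\Gam$ supplies a finite quotient detecting $\gam$. Otherwise $\gam = \sig^i$ for some $i \ne 0$; choosing any integer $d > |i|$ makes the image of $\gam$ in $\wt{\Gam}_d$ nontrivial, since $\sig^i \notin \langle \sig^d \rangle$, and the hypothesized residual finiteness of $\wt{\Gam}_d$ produces the required finite quotient after composition with $\wt{\Gam} \to \wt{\Gam}_d$.

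For part (3), let $\calN$ be the (finitely generated) nilpotent quotient of $\wt{\Gam}$ witnessing condition $N$, and let $\conj{\sig} \in \calN$ denote the image of $\sig$. Since $\conj{\sig}$ is central and of infinite order, $\langle \conj{\sig}^d \rangle \cong \bbZ$ is a central subgroup, and the quotient $\calN^\prime = \calN/\langle \conj{\sig}^d \rangle$ is again finitely generated nilpotent, hence residually finite. The composite $\wt{\Gam} \to \calN \to \calN^\prime$ kills $\sig^d$ and so descends to a homomorphism $\phi : \wt{\Gam}_d \to \calN^\prime$ whose restriction to the cyclic subgroup generated by the image of $\sig$ in $\wt{\Gam}_d$ is injective: an element of this subgroup lies in $\ker \phi$ only if $\conj{\sig}^k \in \langle \conj{\sig}^d \rangle$, which forces $d \mid k$ because $\conj{\sig}$ has infinite order in $\calN$. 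Applying the obvious $\bbZ/d$-analogue of Lemma~\ref{lem:rfquotient} --- proved identically, by separating elements of $\wt{\Gam}_d \ssm \langle \sig \rangle$ via residual finiteness of $\Gam$ and elements of $\langle \sig \rangle$ via $\phi$ together with residual finiteness of $\calN^\prime$ --- then shows that $\wt{\Gam}_d$ is residually finite.

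The only part requiring real thought is part (3), where one must identify the right residually finite target. The construction $\calN/\langle \conj{\sig}^d \rangle$ is natural but relies crucially on $\conj{\sig}$ being both central and of infinite order in $\calN$: centrality makes $\langle \conj{\sig}^d \rangle$ normal, while the infinite order of $\conj{\sig}$ ensures that $\sig$ retains order exactly $d$ in $\calN^\prime$, which is what allows the quotient to detect the full finite cyclic center of $\wt{\Gam}_d$. Parts (1) and (2), by contrast, are formal consequences of commensurability invariance and a case analysis on whether an element lies in $\langle \sig \rangle$.
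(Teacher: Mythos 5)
Your proof is correct. Parts (1) and (2) match the paper's argument essentially verbatim: part (1) is commensurability invariance applied to the finite-index subgroup $\wt{\Lam}=q^{-1}(\Lam)$, and part (2) is the same case split on whether $\gam\in\langle\sig\rangle$, with the only cosmetic difference that you choose $d>|i|$ where the paper chooses $d$ coprime to $i$ (both ensure $d\nmid i$, which is all that is needed). For part (3), the paper does not give an argument at all --- it simply cites \cite[Lem.~2.6]{StoverToledo} --- whereas you supply a complete, self-contained proof. Your construction is sound: $\calN'=\calN/\langle\conj{\sig}^d\rangle$ is again finitely generated nilpotent (hence residually finite), the composite $\wt{\Gam}\to\calN\to\calN'$ kills the normal subgroup $\langle\sig^d\rangle$ and so factors through $\wt{\Gam}_d$, and injectivity on the image of $\langle\sig\rangle$ in $\wt{\Gam}_d$ follows exactly as you say from $\conj{\sig}$ having infinite order. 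The appeal to a $\bbZ/d$-analogue of Lemma~\ref{lem:rfquotient} is legitimate, as the proof of that lemma (separate elements outside $\langle\sig\rangle$ via $\rho\circ q$ for $\rho:\Gam\to F$; separate elements inside $\langle\sig\rangle$ via $\phi$ and residual finiteness of the target) works unchanged when the central kernel is $\bbZ/d$ rather than $\bbZ$.
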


\begin{proof}{\ }
\begin{enum}
\item Since $\wt{\Lam}$ is a finite index subgroup of $\wt{\Gam}$, this equivalence is a standard property of residual finiteness.

\item As explained before the statement of Lemma~\ref{lem:rfquotient}, to prove residual finiteness of $\wt{\Gam}$ we only need to separate nontrivial elements of the kernel of $q$, i.e., the infinite cyclic group $\langle \sig \rangle$, from the identity. Consider an element $\sig^i$, $i \in\bbZ \ssm \{0\}$, and choose any integer $d>|i|$ so that $\wt{\Gam}_d$ is residually finite. Then $(r_d)_*(\sig^i)$ is nontrivial in $\wt{\Gam}_d$ and there is a homomorphism $\phi:\wt{\Gam}_d\to F$ for some finite group $F$ with $\phi(\sig^i )$ nontrivial. Then $\phi\circ (r_d):\wt{\Gam}\to F$ has $\phi\circ(r_d)(\sig^i)$ nontrivial, hence $\wt{\Gam}$ is residually finite.

For the converse, given any $i > 0$ there is a finite quotient $\phi : \wt{\Gam} \to F_i$ so $\phi(\sig^i)$ is nontrivial. Let $d(i)$ be the order of $\phi(\sig^i)$. Then $\phi$ factors through $\wt{\Gam}_{d(i)}$, and it follows from the mod $d(i)$ analogue of Lemma~\ref{lem:rfquotient} that $\wt{\Gam}_{d(i)}$ is residually finite. Since $d(i)$ is greater than or equal to the largest prime divisor of $i$, $\displaystyle{\varlimsup_{i \to \infty} d(i) = \infty}$, which proves the converse.

\item The statement is more technical, so is the proof. See \cite[Lem.~2.6]{StoverToledo}.

\end{enum}
\end{proof}

\subsection{Application of condition $N$}
\label{sub:applyn}

We will need the following useful fact about residually finite $\bbZ/d$-extensions.

\begin{prop}
\label{prop:rfdiv}
Suppose the group $\wt{\Gam}_d$ of an extension $E$ of a group $\Gam$ by $\bbZ / d$ as in Equation~\eqref{eq:Ed} is residually finite. Then there is a finite index subgroup $\Lam \le \Gam$ such that $\iota^*(E)$ is the trivial extension, where $\iota : \Lam \hookrightarrow \Gam$ is the inclusion. Equivalently, if $\al\in H^2(\Gam,\bbZ/d)$, then there exists $\iota : \Lam \hookrightarrow \Gam$ of finite index so that $\iota^*(\al)\in H^2(\Lam,\bbZ/d)$ vanishes.
\end{prop}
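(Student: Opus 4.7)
The plan is to use residual finiteness of $\wt{\Gam}_d$ to extract a finite-index complement to the central $\bbZ/d$, which will trivialize the pullback extension over the image in $\Gam$.

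First I would exploit the finiteness of the center. Since $\bbZ / d$ is finite with only $d-1$ nonidentity elements and $\wt{\Gam}_d$ is residually finite, for each nontrivial element of $\bbZ/d$ there is a homomorphism from $\wt{\Gam}_d$ to a finite group that is nontrivial on it. Taking the product of these (finitely many) homomorphisms, I obtain a single homomorphism $\phi : \wt{\Gam}_d \to F$, with $F$ finite, that is injective on the central subgroup $\bbZ/d \subset \wt{\Gam}_d$.

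Next I would produce the desired finite-index subgroup of $\Gam$. Let $K = \ker(\phi)$, a normal subgroup of finite index in $\wt{\Gam}_d$, and let $q : \wt{\Gam}_d \to \Gam$ denote the quotient map of the extension $E$. By construction $K \cap \bbZ/d = \{1\}$, so $q$ restricted to $K$ is injective. Set $\Lam = q(K)$; this is a finite index subgroup of $\Gam$, and $q|_K : K \to \Lam$ is an isomorphism whose inverse I denote by $s : \Lam \to K$. Because $\bbZ/d$ is central, the preimage $q^{-1}(\Lam)$ equals $\bbZ/d \cdot K$, and $\bbZ/d \cap K = \{1\}$ together with centrality gives an internal direct product decomposition $q^{-1}(\Lam) = \bbZ/d \times K$. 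Then $s : \Lam \hookrightarrow q^{-1}(\Lam)$ is a section of the pullback extension $\iota^*(E)$, where $\iota : \Lam \hookrightarrow \Gam$. This proves that $\iota^*(E)$ is trivial.

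Finally, the cohomological reformulation is immediate from the functorial bijection between $H^2(\Gam, \bbZ/d)$ and isomorphism classes of central $\bbZ/d$-extensions of $\Gam$ recalled in \S\ref{subsub:extensions}: under this bijection $\iota^*(\chi(E)) = \chi(\iota^*(E))$, so vanishing of the pullback extension is equivalent to vanishing of the pullback cohomology class. I do not foresee a genuine obstacle here; the only point requiring any care is verifying that $K$ really produces an internal direct product complement, which is where finiteness of $\bbZ/d$ (and hence the ability to separate it from the identity by a single finite quotient) is essential. This argument would fail for the $\bbZ$-extension $E$ itself, which is why one must pass to $\wt{\Gam}_d$ first.
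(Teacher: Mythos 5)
Your proof is correct, and it is a complete, self-contained argument. The paper's own ``proof'' of this proposition consists of a citation to \cite[Lem.~2.2]{StoverToledo}, so the real work is deferred to the earlier paper; what you wrote is in effect the proof of that lemma. The mechanism you use---separate the finite central subgroup $\bbZ/d$ from the identity by a single finite quotient, take $K = \ker(\phi)$, observe that $K \cap \bbZ/d = \{1\}$ forces $q|_K$ to be an isomorphism onto a finite-index subgroup $\Lam = q(K)$, and then read off the splitting $q^{-1}(\Lam) = \bbZ/d \times K$ from centrality and normality---is the standard argument here, and every step checks out: the index bound $[\Gam : \Lam] \le [\wt{\Gam}_d : K]$, the containment $q^{-1}(\Lam) = (\bbZ/d)\cdot K$, and the fact that $s = (q|_K)^{-1}$ is a genuine group-theoretic section, hence trivializes $\iota^*(E)$. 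You also correctly flag the essential role of finiteness of the center (a single quotient suffices to kill all nontrivial central elements), which is exactly why this runs for $\wt{\Gam}_d$ but not directly for $\wt{\Gam}$. No gaps.
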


\begin{proof}
The first conclusion is a restatement of \cite[Lem.~2.2]{StoverToledo}. The second statement is the reformulation in terms of characteristic classes.
\end{proof}

\begin{thm}
\label{thm:ndiv}
Suppose the extension $E$ of $\Gam$ by $\bbZ$ with group $\wt{\Gam}$ satisfies condition $N$ of Definition~\ref{def:conditionn} and that $\Gam$ is residually finite. Let $\eta\in H^2(\Gam,\bbZ)$ denote the characteristic class of $E$. Then for each integer $d>1$ there exists a subgroup $\Lam\leq\Gam$ of finite index, depending on $d$, such that all of the following equivalent conditions hold:
\begin{enum}
\item The extension $\iota^*((r_d)_*(E)): 0\to\bbZ/d\to\wt{\Lam}_d\to\Lam\to 1$ (that is, the restriction to $\Lam$ of the reduction of $E$ modulo $d$), is the trivial extension.

\item The class $\iota^*((r_d)_*(\eta)) = (r_d)_*(\iota^*(\eta)) \in H^2(\Lam,\bbZ/d)$ is trivial.

\item The restriction $\iota^*(\eta)$ is divisible by $d$ in $H^2(\Lam,\bbZ)$, that is, there exists ${\al\in H^2(\Lam,\bbZ)}$ with $\eta = d\cdot \al$.

\end{enum}
\end{thm}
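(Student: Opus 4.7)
The plan is to first establish that the three conditions (1), (2), (3) are equivalent cohomological statements, and then to produce the claimed $\Lam$ by combining Lemma~\ref{lem:behavior}(3) with Proposition~\ref{prop:rfdiv}. Most of the work has already been done in the preceding subsections, so the proof is essentially an assembly of those results.

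For (1) $\Leftrightarrow$ (2), I would appeal to the functorial correspondence between central extensions and second cohomology recalled in \S\ref{subsub:extensions}: the characteristic class of $\iota^*((r_d)_*(E))$ equals $\iota^*((r_d)_*(\eta))$, which by naturality of restriction and the push-forward $(r_d)_*$ coincides with $(r_d)_*(\iota^*(\eta))$. Since an extension of $\Lam$ by $\bbZ/d$ is trivial precisely when its characteristic class vanishes in $H^2(\Lam,\bbZ/d)$, the two conditions agree. For (2) $\Leftrightarrow$ (3), I would invoke the segment
\[
H^2(\Lam,\bbZ) \xrightarrow{\cdot d} H^2(\Lam,\bbZ) \xrightarrow{(r_d)_*} H^2(\Lam,\bbZ/d)
\]
of the Bockstein long exact sequence attached to the coefficient sequence $0\to\bbZ\xrightarrow{d}\bbZ\xrightarrow{r_d}\bbZ/d\to 0$. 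Exactness says that $(r_d)_*(\iota^*(\eta)) = 0$ if and only if $\iota^*(\eta)$ is in the image of multiplication by $d$, i.e., is divisible by $d$ in $H^2(\Lam,\bbZ)$.

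It then suffices to produce $\Lam$ satisfying any one of these three conditions, and I would aim at (1). Since $E$ satisfies condition $N$, Lemma~\ref{lem:behavior}(3) implies that the group $\wt{\Gam}_d$ of the reduction $(r_d)_*(E)$ is residually finite for our fixed $d>1$. Proposition~\ref{prop:rfdiv}, applied to $(r_d)_*(E)$, then yields a subgroup $\Lam\le\Gam$ of finite index such that $\iota^*((r_d)_*(E))$ is the trivial extension, which is exactly (1). The only subtlety worth flagging in the writeup is the naturality identity $\iota^*\circ (r_d)_* = (r_d)_*\circ \iota^*$ used in the equivalence (1) $\Leftrightarrow$ (2); there is no real obstacle beyond recording it, since both operations are induced by change-of-coefficients and restriction and therefore commute.
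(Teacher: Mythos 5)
Your proof is correct and follows essentially the same route as the paper: use Lemma~\ref{lem:behavior}(3) to get residual finiteness of $\wt{\Gam}_d$ from condition $N$, then apply Proposition~\ref{prop:rfdiv} to produce $\Lam$, with the equivalences (1)\,$\Leftrightarrow$\,(2)\,$\Leftrightarrow$\,(3) coming from the characteristic-class correspondence and the Bockstein sequence. The only difference is that you spell out the Bockstein step for (2)\,$\Leftrightarrow$\,(3), which the paper leaves implicit.
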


\begin{proof}
Since $E$ satisfies condition $N$, the third statement of Lemma \ref{lem:behavior} gives us that $\wt{\Gam}_d = \wt{\Gam} / d \bbZ$ is residually finite. Proposition \ref{prop:rfdiv} gives us the first statement. The second and third statements are equivalent formulations in terms of characteristic classes.
\end{proof}

\subsection{Existence of branched covers}
\label{sub:existbranched}

Let $(X,D)$ be a good pair in the sense of Definition \ref{def:goodpair} and $\Gam =\pi_1(X)$. We want to construct cyclic $d$-fold branched covers, branched along $D$, in the sense of Definition \ref{def:branchedcov}. Proposition~\ref{prop:existroots} gives a number of equivalent conditions for the existence of such a cover, for example, when $c_1(\calO(D))$ divisible by $d$ in $H^2(\Gam,\bbZ)$.

Given $(X,D)$ and $d$, it may be very difficult to check this divisibility condition. However, there are situations where the following weaker statement is already interesting: within a given class of good pairs $\{(X,D)\}$ prove that there exist many members for which the divisibility condition holds. The class of pairs $(X, D)$ of interest to us is that of arithmetic ball quotients $X$ of simple type and $D$ a totally geodesic divisor on $X$. By \lq\lq many members" we mean that any $(X,D)$ in the class has a finite unramified cover $p:X^\prime \to X$ so that $(X^\prime,p^*(D))$ satisfies the above divisibility condition. Here $p^*(D)$ means the divisor on $X^\prime$ with support $p^{-1}(D)$ such that all connected components have multiplicity one. We record the following simple but necessary fact:

\begin{lem}
\label{lem:pstargood}
Suppose $(X,D)$ is a good pair and $p: X^\prime \to X$ is a finite unramified cover. Then $(X^\prime,p^*(D))$ is a good pair. If $s$ is the canonical section of $\calO(D)$, then $p^*(s)$ is the canonical section of $\calO(p^*(D)) = p^*(\calO(D))$.
\end{lem}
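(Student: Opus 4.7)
The plan is to verify, one by one, the three requirements of Definition \ref{def:goodpair} for the pair $(X', p^*(D))$, and then deduce the statement about sections from the fact that $p$ is étale-locally a biholomorphism. First I would observe that $X'$ is smooth (an étale cover of a smooth variety is smooth) and projective (a finite morphism to a projective variety has projective source), so $X'$ is a smooth projective variety. Writing $D = D_1 + \cdots + D_k$ as in the setup, I would then consider $p^{-1}(D_i)$ and decompose it into its finitely many connected components $D'_{i,1}, \ldots, D'_{i,k_i}$. The key local observation is that if $f_i$ is a local defining function for $D_i$ near $x \in D_i$ and $y \in p^{-1}(x)$, étaleness provides open neighborhoods on which $p$ restricts to a biholomorphism, so $f_i \circ p$ is a local defining function for $p^{-1}(D_i)$ near $y$ that vanishes to first order. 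This simultaneously shows that each $D'_{i,j}$ is a smooth codimension-one subvariety of $X'$ and that the scheme-theoretic pullback of $D_i$ has multiplicity one on every component, so it coincides with $p^*(D_i)$ as defined in the lemma; summing over $i$ gives $p^*(D) = \sum_{i,j} D'_{i,j}$.

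Pairwise disjointness is then immediate: for $i \neq i'$ one has $p^{-1}(D_i) \cap p^{-1}(D_{i'}) = p^{-1}(D_i \cap D_{i'}) = \nil$, and for distinct $j$ within a fixed $i$ the components $D'_{i,j}$ are disjoint by construction. This establishes that $(X', p^*(D))$ is a good pair. For the section claim, I would invoke the standard naturality $\calO(p^*(D)) = p^*(\calO(D))$, which holds for flat (in particular étale) pullback of Cartier divisors and applies here precisely because the scheme-theoretic pullback of $D$ agrees with $p^*(D)$ as defined above. Under this identification, $p^*(s)$ is a holomorphic section of $p^*(\calO(D))$ whose zero set is $p^{-1}(Z(s)) = p^{-1}(D)$, and which vanishes to first order along each $D'_{i,j}$ by the same local argument. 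Since the canonical section of $\calO(p^*(D))$ is characterized uniquely up to a nonzero scalar by those two properties, $p^*(s)$ is that canonical section.

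The only point requiring any genuine care, and the closest thing to an obstacle, is checking that the multiplicity-one convention for $p^*(D)$ in the lemma's statement lines up with the scheme-theoretic pullback that underlies the identification $\calO(p^*(D)) = p^*(\calO(D))$. Étaleness of $p$ rules out any ramification contribution to the pullback, so this match-up is automatic, and everything else reduces to transporting local defining equations across the étale biholomorphisms $p|_V$.
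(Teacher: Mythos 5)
Your proof is correct and takes essentially the same approach as the paper: the paper's entire argument is the one-line observation that smoothness and first-order vanishing transport across the local biholomorphisms, which is precisely the core of your argument. You have simply spelled out the routine verifications (components, disjointness, the identification $\calO(p^*(D)) = p^*(\calO(D))$) that the paper leaves implicit.
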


\begin{proof}
Smoothness of $p^{-1}(D)$ and vanishing to first order of $p^*(s)$ on $p^{-1}(D)$ are both clear from the fact that $p: X^\prime \to X$ is locally a biholomorphism.
\end{proof}

\begin{thm}
\label{thm:divide}
Let $(X,D)$ be a good pair, let $d>1$ be an integer, and let:
\begin{enum}
\item $\Gam = \pi_1(X)$ and $\wt{\Gam} =\pi_1(\calO(D)^\times)$;
\item $E: 0\to\bbZ \to \wt{\Gam}\to \Gam \to 1$ be the extension given by the homotopy sequence of the fibration $\calO(D)^\times \to \calO(D)$.

\end{enum}
Suppose that $E$ satisfies condition $N$ of Definition~\ref{def:conditionn}. Then there is a finite unramified cover $p:X^\prime \to X$ so that $c_1(p^*(D))$ is divisible by $d$ in $H^2(X^\prime,\bbZ)$.
\end{thm}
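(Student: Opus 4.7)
The plan is to apply Theorem~\ref{thm:ndiv} directly, then translate the group-cohomological divisibility of $\chi(E)$ into the topological divisibility of $c_1(\calO(D))$ under pullback by a finite unramified cover, using the comparison results already established in \S\ref{subsub:comparison}.

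More precisely, let $\eta = \chi(E) \in H^2(\Gam,\bbZ)$ denote the characteristic class of the central extension $E$ coming from the homotopy sequence of $\calO(D)^\times \to X$. By Lemma~\ref{lem:compare}, under the natural isomorphism $H^2(\Gam,\bbZ) \cong H^2(X,\bbZ)$ the class $\eta$ is identified (up to sign, which is harmless for divisibility) with the Chern class $c_1(\calO(D))$. Since $E$ satisfies condition $N$ and $\Gam$ is residually finite (indeed, $X$ is a smooth aspherical projective variety and in our setting $\pi_1(X)$ is assumed residually finite, which here is automatic since we only apply this theorem after $E$ has been produced as a central extension of a residually finite group — see the discussion surrounding Definition~\ref{def:conditionn}), Theorem~\ref{thm:ndiv} applies. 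It yields a finite-index subgroup $\Lam \le \Gam$ together with an inclusion $\iota : \Lam \hookrightarrow \Gam$ such that $\iota^*(\eta) \in H^2(\Lam,\bbZ)$ is divisible by $d$.

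Next I translate this back to the variety $X$. Since $X$ is aspherical, the subgroup $\Lam$ corresponds to a connected finite unramified cover $p : X^\prime \to X$ with $\pi_1(X^\prime) = \Lam$, and under the identifications $H^*(X,\bbZ) \cong H^*(\Gam,\bbZ)$ and $H^*(X^\prime,\bbZ) \cong H^*(\Lam,\bbZ)$ the induced map $p^* : H^2(X,\bbZ) \to H^2(X^\prime,\bbZ)$ agrees with $\iota^* : H^2(\Gam,\bbZ) \to H^2(\Lam,\bbZ)$; this is exactly the naturality of the bicomplex isomorphism recalled in \S\ref{subsub:comparison}. Consequently $p^*(c_1(\calO(D)))$ is divisible by $d$ in $H^2(X^\prime,\bbZ)$. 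Finally, Lemma~\ref{lem:pstargood} says that $(X^\prime,p^*(D))$ is a good pair and $\calO(p^*(D)) = p^*(\calO(D))$, so by naturality of the first Chern class
\[
c_1(p^*(D)) = c_1(\calO(p^*(D))) = c_1(p^*(\calO(D))) = p^*(c_1(\calO(D))),
\]
which is divisible by $d$ as required.

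There is essentially no serious obstacle: the proof is a packaging of Theorem~\ref{thm:ndiv} (the group-theoretic engine) with the comparison Lemma~\ref{lem:compare} and the compatibility Lemma~\ref{lem:pstargood}. The only point one must be careful about is the identification of $p^*$ on $H^2$ of the spaces with $\iota^*$ on $H^2$ of the fundamental groups, which is guaranteed by asphericity of $X$ and the functoriality of the bicomplex construction recalled earlier in the section.
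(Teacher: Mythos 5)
Your proof is correct and follows essentially the same route as the paper's: apply Theorem~\ref{thm:ndiv} to obtain a finite-index $\Lam \le \Gam$ with $\iota^*(\eta)$ divisible by $d$, take $X'$ to be the cover corresponding to $\Lam$, and translate via Lemma~\ref{lem:compare} and Lemma~\ref{lem:pstargood}. The paper states this in a single sentence as an immediate consequence of Theorem~\ref{thm:ndiv}; your write-up simply spells out the translation steps (the bicomplex naturality and $\calO(p^*(D)) = p^*\calO(D)$) that the paper leaves implicit.
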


\begin{proof}
This is an immediate consequence of Theorem~\ref{thm:ndiv}, by defining $X^\prime$ to be the covering space of $X$ associated with the subgroup $\Lam\le\Gam$.
\end{proof}

\begin{cor}
\label{cor:existbranch}
Let $(X,D)$ be a good pair and suppose the extension
\[
0\lra \bbZ\lra \pi_1(\calO(D)^\times) \lra \pi_1(\calO(D))\lra 1
\]
satisfies condition $N$. Then there is a finite unramified cover $p: X^\prime \to X$ such that there exists a cyclic $d$-fold cover $Y\to X^\prime$ branched along $p^*(D)$.
\end{cor}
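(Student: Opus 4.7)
The plan is to reduce directly to Theorem~\ref{thm:divide} to obtain the divisibility of the Chern class after passing to a finite cover, then apply Proposition~\ref{prop:existroots} to convert the divisibility of $c_1$ into the existence of a $d^{\mathrm{th}}$ root line bundle, and finally invoke Definition~\ref{def:branchedcov} on the resulting good pair to construct $Y$.

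More explicitly, first I would apply Theorem~\ref{thm:divide} to the hypotheses of the corollary. This immediately produces a finite unramified cover $p:X^\prime \to X$ with the property that $c_1(p^*(D)) = c_1(\calO(p^*(D))) \in H^2(X^\prime,\bbZ)$ is divisible by $d$. Next, Lemma~\ref{lem:pstargood} ensures that $(X^\prime, p^*(D))$ is itself a good pair, so we are in a position to apply the previous definitions and propositions of \S\ref{subsub:projective} to it.

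Now I would invoke Proposition~\ref{prop:existroots} with $(X^\prime, p^*(D))$ in place of $(X,D)$. The divisibility just established is exactly condition (2) of that proposition, so it is equivalent to condition (1): there exists a holomorphic line bundle $L\to X^\prime$ with $L^{\otimes d}\cong \calO(p^*(D))$. Given such an $L$, Definition~\ref{def:branchedcov} applied to the good pair $(X^\prime, p^*(D))$ and the line bundle $L$ produces the desired cyclic $d$-fold branched cover
\[
Y = \{v\in L : v^d = s(p(v))\} \lra X^\prime
\]
branched along $p^*(D)$, where $s$ is the canonical section of $\calO(p^*(D)) = p^*\calO(D)$ supplied by Lemma~\ref{lem:pstargood}.

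There is no real obstacle here: all of the technical content has been packaged into Theorem~\ref{thm:divide} (which itself rests on condition $N$ via Theorem~\ref{thm:ndiv} and Lemma~\ref{lem:behavior}) and into the equivalence (1)$\Leftrightarrow$(2) of Proposition~\ref{prop:existroots} (whose nontrivial direction uses the exponential sequence, as noted in the cited reference \cite{HirzebruchRam}). The only thing one must be slightly careful about is that the branched cover construction of Definition~\ref{def:branchedcov} requires a good pair, which is why Lemma~\ref{lem:pstargood} is needed as an intermediate step to lift the good-pair hypothesis from $(X,D)$ to $(X^\prime, p^*(D))$.
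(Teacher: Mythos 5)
Your proof is correct and follows exactly the path the paper implicitly intends: Theorem~\ref{thm:divide} supplies the finite cover $p:X^\prime\to X$ with $c_1(\calO(p^*(D)))$ divisible by $d$, Lemma~\ref{lem:pstargood} confirms $(X^\prime,p^*(D))$ is a good pair, and the final clause of Proposition~\ref{prop:existroots} (via Definition~\ref{def:branchedcov}) then produces the branched cover $Y\to X^\prime$. The paper offers no separate argument for this corollary because it is a direct assembly of these three ingredients, which is precisely what you wrote.
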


Finally, we give a sufficient condition for certain extensions that occur very frequently to satisfy condition $N$. More precisely, we look at a class of extensions for which we can characterize those that satisfy the stronger condition $N_2$ of Definition~\ref{def:conditionn}. The following theorem was stated in \cite{StoverToledo} under an unnecessary assumption. We remove this assumption and give essentially the same proof, partially incorporating some suggestions from the referee.

\begin{thm}[Thm.~5.1 \cite{StoverToledo}]\label{thm:CentralRF1}
Let $X$ be a closed aspherical manifold with fundamental group $\Gam$ and let $L\to X$ be a complex line bundle with Chern class $\om\in H^2(X,\bbZ)$. Equivalently, let $\wt{\Gam}=\pi_1(L^\times)$ and let $E$ be the extension as in Equation~\eqref{eq:EZ} with group $\wt{\Gam}$ and characteristic class $\om\in H^2(\Gam,\bbZ)$. Let $\om_\bbQ$ denote the image of $\om$ in $H^2(\Gam,\bbQ)$. Then, $E$ satisfies condition $N_2$ of Definition~\ref{def:conditionn} if and only if $\om_\bbQ$ is in the image of the map
\[
c_\bbQ : \bigwedge\nolimits^2 H^1(X, \bbQ) \lra H^2(X, \bbQ)
\]
given by evaluation of the cup product.
\end{thm}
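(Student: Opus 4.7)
The plan is to prove this equivalence via the standard dictionary between finitely generated 2-step nilpotent groups and alternating bilinear forms on their torsion-free abelianizations, viewed as characteristic classes of central extensions. The two directions will be established separately.

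For the sufficiency direction, assuming $\om_\bbQ = \sum_{i=1}^k [\al_i] \cup [\beta_i]$ with $\al_i,\beta_i \in H^1(\Gam,\bbQ)$, I would first clear denominators in the $\al_i,\beta_i$ and then multiply by the order of the finite torsion of $H^2(\Gam,\bbZ)$ to reduce to the case $N\om = \sum \al_i \cup \beta_i$ in $H^2(\Gam,\bbZ)$ for some positive integer $N$ with $\al_i,\beta_i \in \Hom(\Gam,\bbZ)$. The next step is to package these into $\phi = (\al_1,\beta_1,\ldots,\al_k,\beta_k) : \Gam \to \bbZ^{2k}$ and to introduce the model 2-step nilpotent Heisenberg-type group $H_k$ fitting in $0 \to \bbZ \to H_k \to \bbZ^{2k} \to 0$ whose characteristic class is $\xi = \sum e_{2i-1}^* \wedge e_{2i}^* \in \Lam^2(\bbZ^{2k})^* = H^2(\bbZ^{2k},\bbZ)$. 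Since $\phi^*(\xi) = N\om$, the pullback extension $\phi^*(H_k) \to \Gam$ is isomorphic to the pushforward of $E$ along $\bbZ \xrightarrow{N} \bbZ$, both being central extensions of $\Gam$ by $\bbZ$ with characteristic class $N\om$. I would then compose the canonical map $\wt{\Gam}\to\wt{\Gam}^{(N)}$, this isomorphism, and the natural projection $\phi^*(H_k)\to H_k$ to obtain a homomorphism $\wt{\Gam}\to H_k$ that sends $\sig$ to $N$ times a generator of the central $\bbZ$ of $H_k$, hence to an element of infinite order; its image furnishes the 2-step nilpotent quotient certifying condition $N_2$.

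For the necessity direction, given a 2-step nilpotent quotient $\pi : \wt\Gam\to\calN$ with $\pi(\sig)$ of infinite order, I would pass to the Mal'cev $\bbQ$-completion $\calN_\bbQ$, a 2-step nilpotent $\bbQ$-Lie group fitting in a central extension $0 \to Z_\bbQ \to \calN_\bbQ \to V_\bbQ \to 0$, where $V_\bbQ = \calN^{ab}\otimes\bbQ$ and $Z_\bbQ = [\calN,\calN]\otimes\bbQ$. Letting $\bar\sig$ be the image of $\sig$ in $\calN_\bbQ$, there are two cases to dispatch. If $\bar\sig \notin Z_\bbQ$, then $\pi(\sig)$ has infinite order in $\calN^{ab}$, so there is a homomorphism $\calN^{ab}\to\bbZ$ sending $\pi(\sig)$ to some nonzero integer $n$; composing yields $\wt\Gam\to\bbZ$ with $\sig \mapsto n$, and the five-term exact sequence of $E$ then forces $n\cdot\om = 0$ in $H^2(\Gam,\bbZ)$, contradicting the hypothesis that $\om$ has infinite order. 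Otherwise $\bar\sig \in Z_\bbQ$, the composition $\wt\Gam\to\calN_\bbQ\to V_\bbQ$ factors through some $q : \Gam\to V_\bbQ$, and we obtain the morphism of central extensions
\begin{equation*}
\begin{array}{ccccccccc}
0 & \to & \bbZ & \to & \wt{\Gam} & \to & \Gam & \to & 1 \\
& & \downarrow f & & \downarrow & & \downarrow q & & \\
0 & \to & Z_\bbQ & \to & \calN_\bbQ & \to & V_\bbQ & \to & 0
\end{array}
\end{equation*}
with $f(1) = \bar\sig$. Naturality of the characteristic class gives $f_*(\om) = q^*(\eta) \in H^2(\Gam,Z_\bbQ)$, where $\eta \in H^2(V_\bbQ,Z_\bbQ) \cong \Lam^2 V_\bbQ^* \otimes Z_\bbQ$ is dual to the commutator bracket of $\calN_\bbQ$. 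Selecting $\psi \in \Hom_\bbQ(Z_\bbQ,\bbQ)$ with $\psi(\bar\sig) = 1$ and applying $\psi_*$ identifies the left side with $\om_\bbQ$ and expresses the right side as $q^*(\psi_*\eta) \in q^*(\Lam^2 V_\bbQ^*) = q^*(H^2(V_\bbQ,\bbQ))$, exhibiting $\om_\bbQ$ as a sum of cup products of classes in $H^1(\Gam,\bbQ)$, as required.

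The hard part of this approach will be the torsion bookkeeping in the necessity direction: a priori $\pi(\sig)$ need not lie in $[\calN,\calN]$ on the nose, but only some power $\pi(\sig)^m$ does, so the naive commutative diagram of extensions over $\bbZ$ simply does not exist. Passing to the Mal'cev $\bbQ$-completion is exactly what clears this obstruction, by replacing $\calN^{ab}$ with the $\bbQ$-vector space $V_\bbQ$ in which the offending torsion vanishes; thereafter the argument is standard manipulation of the naturality of characteristic classes under morphisms of central extensions.
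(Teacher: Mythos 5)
Your proof is correct, and it takes a genuinely different route from the one in \cite{StoverToledo}. The paper's proof (as the accompanying Remark on removing the hard Lefschetz hypothesis makes clear) is cohomological and topological: it works with the unit circle bundle $Y=S(L)\to X$, runs the rational Gysin sequence of $\pi:Y\to X$, and compares the ideals $I_X$ and $I_Y$ generated by the images of the cup-product maps $\bigwedge^2 H^1\to H^2$ on $X$ and $Y$, using that $\pi^*:H^1(X,\bbQ)\to H^1(Y,\bbQ)$ is an isomorphism because $\omega\neq 0$ rationally. Your argument is instead purely group-theoretic. For sufficiency you clear denominators and torsion, package the $\alpha_i,\beta_i$ into a map $\phi:\Gam\to\bbZ^{2k}$, and pull back the integral Heisenberg extension along $\phi$; identifying the result with the pushforward of $E$ along multiplication by $N$ (both have characteristic class $N\omega$) produces an explicit homomorphism $\wt\Gam\to H_k$ sending $\sigma$ to a nonzero central element, which certifies condition $N_2$. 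For necessity you pass to the Mal'cev $\bbQ$-completion of a witnessing nilpotent quotient, dispose of the case $\bar\sigma\notin Z_\bbQ$ via the five-term exact sequence and the infinite order of $\omega$, and in the remaining case read off $\omega_\bbQ$ as a pullback of a class in $\bigwedge^2 V_\bbQ^*$ via naturality of characteristic classes under a morphism of central extensions, together with a linear functional $\psi$ splitting off the $\bar\sigma$-direction. This cleanly sidesteps the integral torsion issues that the Gysin-sequence approach handles by working over $\bbQ$. The tradeoffs: the paper's argument keeps the geometry of the line/circle bundle visible and fits naturally into the manifold setting, whereas yours is self-contained algebra that in fact applies to any finitely generated residually finite $\Gam$ with $H^2(\Gam,\bbZ)$ finitely generated, with no appeal to Poincar\'e duality or asphericity beyond the identification $H^*(X)\cong H^*(\Gam)$. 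Both are complete; neither direction of your argument has a gap, though in a written-up version you would want to justify $H^2(V_\bbQ,\bbQ)\cong\bigwedge^2_\bbQ V_\bbQ^*$ for $V_\bbQ$ a finite-dimensional $\bbQ$-vector space viewed as a discrete group (or, more simply, replace $V_\bbQ$ by a full-rank lattice $\bbZ^m\subset V_\bbQ$ through which the finitely generated image of $q$ factors, and work with $H^*(\bbZ^m,\bbQ)$ instead).
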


\begin{proof}
Let $pr:\Gam \to \Gam^{ab}$ be the projection of $\Gam$ to its abelianization. Since $pr^*:H^1(\Gam^{ab},\bbQ)\to H^1(\Gam,\bbQ)$ is an isomorphism and $H^2(\Gam^{ab},\bbQ) \cong \bigwedge\nolimits^2 H^1(\Gam^{ab}, \bbQ)$, we see that $\om_\bbQ$ is in the image of $c_\bbQ$ if and only if it is in the image of $pr^*:H^2(\Gam^{ab},\bbQ)\to H^2(\Gam,\bbQ )$, that is, if and only if there exists $\wt{\om}_\bbQ\in H^2(\Gam^{ab}, \bbQ)$ with $pr^*(\wt{\om}_\bbQ) = \om_\bbQ$.  Let $\wt{\Gam^{ab}}$ be the group of the extension of $\Gam^{ab}$ by $\bbQ$ with characteristic class $\wt{\om}_\bbQ$.  Thus the original extension with characteristic class $\om\in H^2(\Gam,\bbZ)$ maps to the extension with characteristic class $\wt{\om}_\bbQ \in H^2(\Gam^{ab},\bbQ)$:
\[
\begin{tikzcd}
0 \arrow{r} & \bbZ \arrow{r} \arrow{d} & \wt{\Gam}\arrow{r} \arrow{d} & \Gam \arrow{r} \arrow{d} & 1\\
0\arrow{r}& \bbQ \arrow{r} & \wt{\Gam^{ab}} \arrow{r} & \Gam^{ab}\arrow{r} & 1 \end{tikzcd}
\]
The group $\wt{\Gam^{ab}}$, as a central extension of an abelian group, is nilpotent of step size at most two, and the central $\bbZ$ in $\wt{\Gam}$ maps injectively to the center of $\wt{\Gam^{ab}}$. Thus the image of $\wt{\Gam}$ in $\wt{\Gam^{ab}}$ is a nilpotent quotient of $\Gam$ satisfying Definition \ref{def:conditionn}, and so the extension $\wt{\Gam}$ satisfies condition $N_2$. Note that $\wt{\Gam}$ satisfies condition $N_1$ if and only if $\om_\bbQ = 0$.

Before proving the converse, it is useful to write down some consequences of the Gysin sequence for the fibration $p: L^\times \to X$ stated in terms of the induced map on fundamental groups  $p:\wt{\Gam}\to \Gam$. We use $\bbQ$ coefficients throughout. The sequence
\[
0 \lra H^1(\Gam) \overset{p^*}{\lra} H^1(\wt{\Gam})\overset{p_*}{\lra} H^0(\Gam)\overset{\om_\bbQ}{\lra} H^2(\Gam)\overset{p^*}{\lra} H^2(\wt{\Gam})
\]
shows that if $\om_\bbQ\ne 0$, then $p^*:H^1(\Gam,\bbQ) \overset{\cong}{\lra} H^1(\wt{\Gam},\bbQ)$ is an isomorphism, equivalently, $p_* : \wt{\Gam}^{ab}/ Tor \overset{\cong}{\lra} \Gam^{ab}/Tor$ is an isomorphism, where $Tor$ denotes the respective torsion subgroup, and the kernel of $p^*:H^2(\Gam,\bbQ)\lra H^2(\wt{\Gam},\bbQ)$ is the subspace $\bbQ\, \om_\bbQ$ of $H^2(\Gam,\bbQ)$.

Using this information to compare the kernels $K_{\Gam}, K_{\wt{\Gam}}$ and the images $I_{\Gam},I_{\wt{\Gam}}$ of the cup-product maps $c_\bbQ^\Gam, c_\bbQ^{\wt{\Gam}}$ we have:
\begin{equation}\label{eq:cupproduct}
\begin{tikzcd}
0 \arrow{r} & K_\Gam \arrow{r} \arrow{d}{p^*} & \bigwedge\nolimits^2 H^1(\Gam,\bbQ) \arrow{r}{c_\bbQ^\Gam} \arrow{d}{p^*} & I_\Gam \arrow{r}{\subset} \arrow{d}{p^*}  &H^2(\Gam,\bbQ)\arrow{d}{p^*}\\
0 \arrow{r} & K_{\wt{\Gam}} \arrow{r} & \bigwedge\nolimits^2 H^1(\wt{\Gam},\bbQ) \arrow{r}{c_\bbQ^{\wt{\Gam}}} & I_{\wt{\Gam}}\arrow{r}{\subset} & H^2(\wt{\Gam},\bbQ)
\end{tikzcd}
\end{equation}
Since the second vertical arrow is an isomorphism, it follows that the third vertical arrow is surjective: $I_{\wt{\Gam}} = p^*(I_\Gam)$. Since the kernel of the last vertical arrow is $\bbQ\, \om_\bbQ$, it follows that 
\begin{equation}\label{eq:jump}
p^*: I_\Gam/(I_\Gam\cap\bbQ\, \om_\bbQ) \lra I_{\wt{\Gam}}
\end{equation}
is an isomorphism.

Now we can prove the converse statement. Suppose our extension $E$ satisfies condition $N_2$. Let $N(\Gam), N(\wt{\Gam})$ denote the maximal $2$-step nilpotent quotients of $\Gam, \wt{\Gam}$ respectively. Then $\bbZ \le \wt{\Gam}$ must inject into $N(\wt{\Gam})$. These two nilpotent quotients are related as follows, where $Z( - )$ denotes the center of the respective groups:
\[
\begin{tikzcd}
0 \arrow{r} & Z(\wt{\Gam})\arrow{d}{p_*}\arrow{r} & N(\wt{\Gam})\arrow{d}{p_*}\arrow{r} & \wt{\Gam}^{ab}\arrow{d}{p_*}\arrow{r} & 0\\
0\arrow{r} & Z(\Gam)\arrow{r} & N(\Gam)\arrow{r} & \Gam^{ab} \arrow{r} &0
 \end{tikzcd}
\]
Since the central $\bbZ$ in $\wt{\Gam}$ maps trivially to $\Gam$, it has trivial image in $N(\Gam)$. Moreover, since the last vertical arrow is an isomorphism modulo torsion, a subgroup of finite index in $\bbZ$ maps trivially to $\wt{\Gam}^{ab}$, hence this infinite cyclic subgroup of $\bbZ$ injects into $Z(\wt{\Gam})$ but maps trivially to $Z(\Gam)$. Therefore the rank of $Z(\wt{\Gam})$ is strictly larger than the rank of $Z(\Gam)$.

Algebraic topology tells us that there are canonical isomorphisms
\begin{align*}
\Hom(Z(\Gam),\bbQ) &\cong K_\Gam & \Hom(Z(\wt{\Gam}),\bbQ) &\cong K_{\wt{\Gam}}
\end{align*}
where $K_\Gam,K_{\wt{\Gam}}$ are the kernels of the cup product maps in Equation \eqref{eq:cupproduct} (e.g., see \cite{Sullivan, BeauvilleDerived}). Letting lower case letters denote the dimensions of the corresponding $\bbQ$-vector spaces in Equation \eqref{eq:cupproduct}, we see that $k_\Gam + i_\Gam = k_{\wt{\Gam}} + i_{\wt{\Gam}}$, hence
\[
i_\Gam - i_{\wt{\Gam}} = k_{\wt{\Gam}} - k_\Gam.
\]
Since Equation \eqref{eq:jump} tells us that 
\[
i_\Gam - i_{\wt{\Gam}} = \begin{cases}
    1  & \text{if $\om\in I_\Gam $}, \\
      0 & \text{otherwise}.
\end{cases}
\]
we get, in particular, that if the rank of $Z(\wt{\Gam})$ is strictly larger than the rank $Z(\Gam)$, then $\om_\bbQ$ is in the image of the cup product, as desired.
\end{proof}

\section{Arithmetic lattices in $\PU(n,1)$}\label{sec:Arith}

\subsection{Congruence arithmetic lattices of simple type}\label{ssec:Simple}

The arithmetic lattices under consideration in this paper are those that are cocompact of \emph{simple type}. These are constructed as follows, where we generally follow the notation established in \cite[\S 6]{Acta}. Let $E / F$ be a totally imaginary quadratic extension of a totally real field with $[F : \bbQ] = d \ge 2$, let $\tau_1, \dots, \tau_d : E \to \bbC$ be representatives for the complex conjugate pairs of embeddings of $E$, and let $x \mapsto \conj{x}$ be the Galois involution of $E / F$, which extends to complex conjugation under any complex embedding of $E$.

Fix a nondegenerate hermitian vector space $V$ over $E$ of dimension $n+1$, and let $V_{\tau_j} \cong \bbC^{n+1}$ be the completion of $V$ with respect to the complex embedding $\tau_j$ of $E$. We assume that $V_{\tau_1}$ has signature $(n,1)$ and that $V_{\tau_j}$ is definite for $j \ge 2$. The assumption that $d \ge 2$ implies that there is at least one completion where $V$ is a definite hermitian space. It follows that $V$ is anisotropic.

The unitary group $\U(V)$ is a reductive algebraic group over $F$, and let $G_1 = \mathrm{Res}_{F / \bbQ} \U(V)$ be the restriction of scalars from $F$ to $\bbQ$. The above assumptions on the signature of $V$ at the various complex embeddings of $E$ imply that
\[
G_1(\bbR) \cong \U(n,1) \times \prod_{j \ge 2} \U(n+1).
\]
We can identify the ball $\bbB^n$ with $G_1(\bbR) / K_\infty$, where
\[
K_\infty \cong \left(\U(n) \times \U(1)\right) \times \prod_{j \ge 2} \U(n+1)
\]
is a chosen maximal compact subgroup of $G_1(\bbR)$.

Similarly, $G$ will denote the restriction of scalars from $F$ to $\bbQ$ of the group of unitary similitudes of $V$. If $\bbA_\bbQ$ denotes the adeles of $\bbQ$ and $\bbA_\bbQ^f$ the finite adeles, for each prime $p$ choose an open compact subgroup $K_p < G(\bbQ_p)$, and set $K = \prod K_p < G(\bbA_\bbQ^f)$. If the $K_p$ are chosen so that $K$ is open in $G(\bbA_\bbQ^f)$, we then obtain a Shimura variety
\[
S(K) = G(\bbQ) \bs \left(\bbB^n \times G(\bbA_\bbQ^f) \right) / K.
\]
As described in \cite[\S 6.4]{Acta}, $S(K)$ is a finite disjoint union of connected components $S(\Gam_j) = \Gam_j \bs \bbB^n$, where
\[
\Gam_j < G_{\textrm{ad}}(\bbR) \cong \PU(n,1) \times \prod_{j \ge 2} \PU(n+1)
\]
is a cocompact lattice. We will call lattices $\Gam_j$ of this kind \emph{congruence arithmetic lattices of simple type}. If we choose $K$ sufficiently small (e.g., neat) then each $\Gam_j$ is torsion-free and the ball quotient $S(\Gam_j)$ is both a manifold and a smooth projective variety. From this point forward, all $K$ are assumed to be sufficiently small in this sense.

\begin{rem}
Strictly speaking, we have only defined the \emph{cocompact} arithmetic lattices of simple type. One also obtains nonuniform lattices by taking $F = \bbQ$ in the above construction. For $n=2$, these are the classical Picard modular groups.
\end{rem}

\subsection{Cohomology of Shimura varieties}\label{ssec:Cohom}

In the notation from \S \ref{ssec:Simple}, if $K^\prime \le K < G(\bbA_\bbQ^f)$, then there is a finite map $S(K^\prime) \to S(K)$ with pullback
\[
H^*(S(K), R) \lra H^*(S(K^\prime), R)
\]
for any coefficient ring $R$. This allows us to define
\[
H^*(\mathrm{Sh}(G), R) = \varinjlim_K H^*(S(K), R).
\]
See \cite[\S 6.6]{Acta} for further details. For each $K$, we also have the Chern form $c_1(S(K)) \in H^2(S(K), \bbZ)$, which of course behaves well with respect to pull-back. In particular, we obtain a limiting class $\wt{c}_1 \in H^2(\mathrm{Sh}(G), \bbZ)$.

\subsection{Totally geodesic hypersurfaces}\label{ssec:Hyper}

We start with the following, which justifies restricting to lattices of simple type when considering arithmetic ball quotients containing a totally geodesic hypersurface. The case $n=2$ is found in \cite[\S 1]{MollerToledo}.

\begin{prop}\label{prop:TGSimple}
For $n \ge 2$, let $\Gam < \PU(n,1)$ be an arithmetic lattice so that $\Gam \bs \bbB^n$ contains a totally geodesic codimension one subvariety. Then $\Gam$ is of simple type.
\end{prop}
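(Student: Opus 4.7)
The plan is to argue the contrapositive: assuming $\Gam$ is arithmetic but not of simple type, I will show that $\Gam \bs \bbB^n$ admits no codimension-one totally geodesic complex subvariety. By the classification of arithmetic lattices in $\PU(n,1)$, a non-simple-type $\Gam$ is commensurable with (the image in $\PU(n,1)$ of) the integer points of the unitary group $\U(V,h)$, where $V = \calD^r$ for a central simple division algebra $\calD$ with involution of second kind over a CM extension $E/F$, with $\deg_E(\calD) = m \ge 2$ and $n + 1 = mr$. The group $\U(V,h)$ is $F$-algebraic and consists of $\calD$-linear isometries of $(V,h)$.

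A codimension-one totally geodesic complex subvariety $D \subset \Gam \bs \bbB^n$ lifts to a $\Gam$-orbit of complex hyperplanes in $\bbB^n$. Identify $\bbB^n$ with the negative lines in $\bbC^{n+1}$, where $\bbC^{n+1}$ arises from the Morita equivalence applied to $V_{\tau_1} \cong M_m(\bbC)^r$ at the archimedean place $\tau_1$ of signature $(n,1)$. A hyperplane $H$ corresponds to an orthogonal $\bbC$-linear decomposition $\bbC^{n+1} = W \oplus L$ with $\dim_\bbC W = n$ and signature $(n-1,1)$ and $L$ a definite complex line. Its stabilizer $\Gam_H \le \Gam$ is a lattice in the real stabilizer of $H$ inside $\PU(n,1)$, which is of type $\PU(n-1,1) \times \U(1)$.

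The key step is the descent. By Margulis, the commensurator $\mathrm{Comm}_{\PU(n,1)}(\Gam)$ is dense in $\PU(n,1)$ and acts on the set of totally geodesic codimension-one complex subvarieties of $\Gam \bs \bbB^n$. Combined with the Zariski density of $\Gam_H$ in its real stabilizer, the argument of M\"oller--Toledo for $n=2$ \cite[\S 1]{MollerToledo} shows that the stabilizer of $H$ inside $\U(V,h)$ is an $F$-algebraic subgroup of the form $\U(V_1,h|_{V_1}) \times \U(V_2,h|_{V_2})$ arising from an $F$-rational orthogonal decomposition $V = V_1 \oplus V_2$. Because $\U(V,h)$ is defined as a group of $\calD$-linear maps, each $V_i$ must be a $\calD$-submodule of $V$.

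Let $r_i$ denote the $\calD$-rank of $V_i$, so $r_1 + r_2 = r$. Via Morita equivalence at $\tau_1$, the resulting $\bbC$-subspace of $\bbC^{n+1}$ coming from $V_1$ has $\bbC$-dimension $m r_1$. Matching this with $\dim_\bbC W = n = mr - 1$ forces $m(r - r_1) = m r_2 = 1$, which is impossible since $m \ge 2$ and $r_2 \ge 1$. The main obstacle is the descent step; however, the commensurator-density and Zariski-closure tools used in \cite{MollerToledo} are not dimension-sensitive, so the extension from $n = 2$ to arbitrary $n \ge 2$ should be essentially notational, reducing the conclusion to the final divisibility $m \mid 1$.
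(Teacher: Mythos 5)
The contrapositive framing and the final coprimality conclusion ($m \mid n$ and $m \mid n+1$ forcing $m=1$) mirror the arithmetic heart of the paper's argument, but the descent step you flag as ``the main obstacle'' is not notational: it contains a genuine gap. Your claim that the Zariski closure of $\Gam_H$ in $\U(V,h)$ must arise from an $F$-rational orthogonal decomposition $V = V_1 \oplus V_2$ into $\calD$-submodules does not follow from $\U(V,h)$ consisting of $\calD$-linear maps, and it is too restrictive. The hyperplane $W$ is a $\bbC$-subspace of $V_{\tau_1}$, and the $F$-algebraic subgroup stabilizing it need not correspond to a $\calD$-submodule of $V$: in the classification of how arithmetic lattices of $\PU(n-1,1)$ embed totally geodesically in arithmetic lattices of $\PU(n,1)$, the subgroup attached to the geodesic divisor is the unitary group $\U(\calD_0^{r_0}, h_0)$ of a hermitian module over a \emph{subalgebra} $\calD_0 \subseteq \calD$ of degree $d_0$ dividing $d$ (and a priori even over a smaller CM subfield $E_0 \subseteq E$), with $d_0 r_0 = n$, where the image of $\calD_0^{r_0}$ in $V$ is in general an $E_0$-subspace that is not $\calD$-stable. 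Your contradiction $m r_2 = 1$ therefore rules out only the special case $\calD_0 = \calD$ and leaves all the subalgebra cases untouched. The paper's proof is precisely about closing this: coprimality of $n$ and $n+1$ forces $d_0 = 1$, i.e.\ $\calD_0 = E$, and a further rank and divisibility argument then forces $r = n+1$, hence $d = 1$. Your appeal to M\"oller--Toledo also misreads what their argument does: for $n=2$ in the non-simple case one has $m=3$ and $r=1$, so $V = \calD$ has \emph{no} proper nonzero $\calD$-submodules at all, and the entire content of their proof is in ruling out exactly the non-submodule structures you are skipping over.
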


\begin{pf}[Sketch]
Suppose $\Gam < \PU(n,1)$ is arithmetic. Then there exists a totally imaginary quadratic field $E$, a central simple division algebra $\calD$ over $E$ of degree $d$ with involution $\sig$ of the second kind, and a $\sig$-hermitian form $h$ on $\calD^r$ so that the algebraic group $\calG$ associated with $\Gam$ is the projective unitary group of $h$. Note that $n+1 = d r$. The case where $\Gam$ has simple type is precisely the case $d = 1$ (i.e., $\calD = E$) and $r = n+1$.

If $\Gam \bs \bbB^n$ contains a totally geodesic subvariety of the form $\Lam \bs \bbB^{n-1}$, it is well known that $\Lam$ is also arithmetic. Then there is a totally imaginary subfield $E_0$ of $E$, a subalgebra $\calD_0 \subseteq \calD$ over $E_0$ of degree $d_0$ dividing $d$, and an embedding of $\calD_0^{r_0}$ in $\calD^r$ so that $d_0 r_0 = n$ and the restriction of $h$ to the image of $\calD_0^{r_0}$ is the algebraic group associated with $\Lam$. The fact that $\calG$ is noncompact at exactly one place of $E$ forces $E_0 = E$. Now, $d_0$ divides $n$ and $n+1$, which forces $d_0 = 1$. Then $r_0 = n$, but $r_0 \le r \le n+1$, so $r$ is $n$ or $n+1$. However, $n \ge 2$ and $r$ divides $n+1$, so $r = n+1$, hence $d=1$, which proves the proposition.
\end{pf}

One often classifies the immersed totally geodesic complex codimension one subvarieties of arithmetic ball quotients $\Gam \bs \bbB^n$ by $\Gam$-orbits of lines $\ell \subset V$ so that the restriction of the hermitian form to $\ell$ is positive under $\ell \hookrightarrow V_{\tau_1}$ or, dually, by codimension one subspaces $W \subset V$ so that $W_{\tau_1}$ has signature $(n-1,1)$. We instead follow the notation of Kudla--Millson \cite[\S 2\,\&\,9]{KM} and Bergeron--Millson--Moeglin \cite[\S 8]{Acta}, since it will be used for the proof of our main results.

For each $d \times d$ hermitian matrix $\beta \in \M_d(E)$ that is positive definite at $\tau_1$, Kudla and Millson define a \emph{special cycle} $C_\beta$. This is a locally finite cycle consisting of totally geodesic subvarieties of (complex) codimension $d$ \cite[p.~132]{KM}. Moreover, all immersed totally geodesic subvarieties are contained in the support of some $C_\beta$. See the above references for further technical details, since they will not concern us in this paper. As pointed out to us by Nicolas Bergeron, the following result has the exact same proof as \cite[Cor.~8.4]{Invent}, except that paper covers the orthogonal case. We give the argument for completeness.

\begin{thm}\label{thm:ChernInSpan}
Let $X = \Gam \bs \bbB^n$ be a smooth compact ball quotient, $n \ge 2$, with $\Gam$ a congruence arithmetic lattice of simple type, and let $c_1$ be the Chern form on $X$, considered as an element of $H^2(X, \bbC)$. Then $c_1$ is contained in the span of the collection of all Poincar\'e duals to codimension one totally geodesic subvarieties of $X$.
\end{thm}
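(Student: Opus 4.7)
The plan is to follow the strategy of Bergeron--Li--Millson--Moeglin \cite[Cor.~8.4]{Invent}, transferring their argument for orthogonal Shimura varieties to the unitary setting of $\PU(n,1)$. The idea is to realize $c_1$ as the degenerate Fourier coefficient of a Kudla--Millson theta series valued in $H^2(\mathrm{Sh}(G), \bbC)$, and then to use the Siegel--Weil formula to express that degenerate coefficient as a linear combination of the nonzero Fourier coefficients, which are precisely the Poincar\'e duals of the totally geodesic codimension one special cycles $[C_\beta]^\vee$ of \S\ref{ssec:Hyper}.

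First I would invoke the unitary Kudla--Millson construction \cite{KM} for the dual reductive pair $(\U(1,1), \U(V))$. This produces an archimedean Schwartz form $\varphi_{KM}$ which, paired with Schwartz functions at the finite places, yields a theta series $\theta_{KM}(\tau)$ that is simultaneously an automorphic form on $\U(1,1)$ in the modular variable $\tau$ and a closed $2$-form on each $S(K)$. By the central identity of Kudla--Millson, the Fourier coefficients of $[\theta_{KM}(\tau)]$ at totally positive $\beta\in F$ are explicit nonzero constants times the classes $[C_\beta]^\vee$, while $\varphi_{KM}$ was designed so that its degenerate ($\beta=0$) term is a nonzero universal multiple of the invariant form representing $c_1$, modulated by an explicit function of $\tau$. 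Taking the direct limit over levels $K$ realizes $\theta_{KM}$ as an automorphic form valued in $H^2(\mathrm{Sh}(G), \bbC)$ whose Fourier expansion has the form \emph{(something)}$\cdot c_1 + \sum_{\beta > 0} (\textrm{const}) [C_\beta]^\vee$.

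Second I would invoke the unitary Siegel--Weil formula to identify $\theta_{KM}(\tau)$ with an Eisenstein series on $\U(1,1)$, whose Fourier expansion involves only the classes $[C_\beta]^\vee$ with $\beta>0$ (no $c_1$ term). Since $\dim_E V = n+1 \ge 3$, we sit inside the classical convergent range for this dual pair, so no regularization is needed. Equating the two Fourier expansions at a suitable value of $\tau$, or equivalently projecting onto a suitable Hecke eigen-line, then forces $c_1$ to equal a finite $\bbC$-linear combination of the $[C_\beta]^\vee$ in $H^2(\mathrm{Sh}(G), \bbC)$. Pulling back from $\mathrm{Sh}(G)$ to $X = S(K)$ via the natural map yields the theorem.

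The main obstacle is the Siegel--Weil step: one must verify both that the Weil representation stays inside the absolutely convergent range and, more importantly, match constants so that the degenerate archimedean term of the Eisenstein side indeed corresponds to a \emph{nonzero} scalar multiple of $c_1$ (a zero multiple would be fatal to the argument). In the orthogonal case \cite[Cor.~8.4]{Invent} this is handled by Kudla--Rallis regularization together with Ichino's identity; here one needs the analogous unitary results due to Ichino and Yamana, which are by now standard but require care. A secondary technical point is descent: the identity is extracted a priori in the limit $H^2(\mathrm{Sh}(G), \bbC)$, and one must use that after Hecke projection only finitely many terms survive in order to obtain a genuine finite $\bbC$-linear relation on the fixed level $X$.
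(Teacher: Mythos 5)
The paper's proof is a short duality argument that avoids Siegel--Weil entirely, so your proposal takes a genuinely different---and considerably heavier---route, with a couple of conceptual gaps.

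What the paper does (following \cite[Cor.~8.4]{Invent}, as attributed to Bergeron): suppose for contradiction that $c_1$ is \emph{not} in the span of the $[C_\beta]^\vee$. Then one can choose a linear functional $\eta \in H^{2(n-1)}(X,\bbC)$ with $\int_X \eta\wedge c_1 \neq 0$ but $\int_{C_\beta}\eta = 0$ for every codimension-one special cycle $C_\beta$. Feeding this $\eta$ into the Kudla--Millson generating function
\[
P(\zeta,\eta) = \sum_{t=0}^{1}\sum_{\beta\in\calL(t)}\Bigl(\int_{C_\beta}\eta\wedge c_1^{1-t}\Bigr)\,e_*(\beta\zeta)
\]
all nonconstant Fourier coefficients vanish by the choice of $\eta$, so $P(\zeta,\eta)$ is the nonzero constant $\int_X \eta\wedge c_1$. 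But \cite[Thm.~3]{KM} says $P(\cdot,\eta)$ is a holomorphic modular form of weight $n+1 \ge 3$ for a congruence subgroup of a form of $\U(1,1)$, and a nonzero constant is not a holomorphic modular form of positive weight. Contradiction. That is the entire argument: modularity of the generating function plus the triviality ``constant $\ne$ weight $n+1$.''

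Your proposal instead tries to produce the linear combination explicitly via a Siegel--Weil identity. Two issues. First, as you yourself flag, this imports much heavier machinery (unitary Siegel--Weil, Ichino--Yamana, convergence/regularization, constant matching) that the problem simply does not require; the whole point of Bergeron's observation is that modularity alone, used in a soft duality argument, already forces the conclusion. Second, there is a gap in the formulation of Step~2: the Siegel--Weil formula compares the theta \emph{integral} (a scalar-valued automorphic form on $\U(1,1)$) with a scalar-valued Eisenstein series. It does not directly give you an identity of cohomology-valued Fourier expansions in $H^2(\mathrm{Sh}(G),\bbC)$; the assertion that ``the Eisenstein series's Fourier expansion involves only the classes $[C_\beta]^\vee$'' conflates the $q$-expansion of a scalar automorphic form with the cycle-valued expansion of Kudla--Millson's $\theta_{KM}$. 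Turning your plan into a proof would essentially require a form of Kudla's arithmetic/cohomological Siegel--Weil, a substantially deeper statement than what is needed or used here. If you want to go the ``positive'' route (directly writing $c_1$ as a combination of $[C_\beta]^\vee$ rather than arguing by contradiction), the cleaner observation is the linear-algebra dual of the paper's argument: the constant-term map on the finite-dimensional space of weight-$(n+1)$ forms spanned by the specializations $P(\cdot,\eta)$ is determined by the nonconstant coefficients because there are no nonzero constants of that weight. No Siegel--Weil is required.
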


\begin{pf}
Suppose not. Then there exists $\eta \in H^2(X, \bbC)^* = H^{2(n-1)}(X, \bbC)$ such that:
\begin{align*}
\int_X \eta \wedge c_1 &\neq 0 \\
\int_C \eta &= 0 & \textrm{for all totally geodesic}~C~\textrm{of codimension}~1
\end{align*}
In other words, the periods of $\eta$ over all codimension $1$ geodesic subvarieties are zero.

Following, \cite[p.~126]{KM}, let $\calL(t)$ be the space of hermitian $1 \times 1$ matrices of rank $t \le 1$ over the ring of integers $\calO$ of the totally imaginary field $E$ from the definition of the arithmetic lattice $\Gam$. Then $\calL(1)$ is the ring of integers $\calO_F$ of the maximal totally real subfield $F$ of $E$ and $\calL(0) = \{0\}$. For each $\beta \in \calL(t)$, we have the totally geodesic cycle $C_\beta$ of codimension $t$ in $X$, where $C_0 = X$ and for $\beta$ in the nonzero elements $\calO_F^\times$ of $\calO_F$ we have $C_\beta$ of codimension $1$.

For $\beta \in \calL(1)$ and $\zeta \in \bbC^{[F : \bbQ]}$, following \cite[p.~125]{KM}, define
\[
e_*(\beta \zeta) = \prod_{j = 1}^{[F : \bbQ]} \exp(\pi i \tr_{\bbC / \bbR}(\tau_j(\beta)) \zeta_{\tau_j}),
\]
where the product is over the places $\{\tau_j\}$ of $E$ and the coordinates of $\zeta$ are labelled accordingly. Note that $e_*(0 \zeta) = 1$ for all $\zeta$. As on \cite[p.~127]{KM} we then have that
\begin{align*}
P(\zeta, \eta) &= \sum_{t = 0}^1 \sum_{\beta \in \calL(t)} \left( \int_{C_\beta} \eta \wedge c_1^{1-t} \right) e_*(\beta \zeta) \\
&= e_*(0 \zeta) \int_X \eta \wedge c_1 + \sum_{\beta \in \calO_F^\times} e_*(\beta \zeta) \int_{C_\beta} \eta \\
&= \int_X \eta \wedge c_1
\end{align*}
is a constant function of $\zeta$ with value $\int_X \eta \wedge c_1 \neq 0$, since $\int_{C_\beta} \eta = 0$ for all $\beta$ by hypothesis.

However, by \cite[Thm.~3]{KM}, the function $P(\zeta, \eta)$ of $\zeta$ is a holomorphic modular form of weight $n+1$ for a suitable congruence subgroup of the group of $\calO$-points of a certain form of $\U(1,1)$ over $F$. Since a nonzero constant function is not a holomorphic modular form of higher weight, this contradiction proves the theorem.
\end{pf}

\subsection{The cup product and special cycles}\label{ssec:SpecialCup}

The purpose of this section is to prove the following theorem, whose proof was communicated to us by Nicolas Bergeron.

\begin{thm}\label{thm:Cup}
Let $X = \Gam \bs \bbB^n$ be a compact congruence arithmetic ball quotient of simple type and $\mathrm{SC}^1(X) \subseteq H^2(X, \bbC)$ be the span of the Poincar\'e duals to the totally geodesic codimension one subvarieties of $X$. Then:
\begin{enum}

\item For any class $\sig \in \mathrm{SC}^1(X)$, there is a congruence cover $p : X^\prime \to X$ so that $p^*(\sig) \in \mathrm{SC}^1(X^\prime)$ is in the image of the cup product map
\[
c : \bigwedge\nolimits^2 H^1(X^\prime, \bbC) \lra H^2(X^\prime, \bbC).
\]

\item If $n \ge 3$, then for every $\phi \in H^{1,1}(X, \bbC)$ we can find a congruence cover $p : X^\prime \to X$ so that $p^*(\phi)$ is contained in the image of $c$.

\item If $n \ge 4$, then for all $\phi \in H^2(X, \bbC)$ we can find a congruence cover $p : X^\prime \to X$ so that $p^*(\phi)$ is in the image of $c$.

\end{enum}

\end{thm}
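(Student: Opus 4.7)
The plan is to deduce all three parts from the Vogan--Zuckerman classification of cohomological representations of $\U(n,1)$ combined with a cup-product generation theorem implicit in the work of Bergeron, Millson, and Moeglin \cite{Acta}. By Matsushima's formula, $H^2(X,\bbC)$ decomposes as a direct sum of isotypic contributions indexed by cohomological unitary $(\frakg,K)$-modules $A_\frakq$, where $\frakq$ ranges over $\theta$-stable parabolic subalgebras of $\fraku(n,1)$. The Levi component of $\frakq$ controls both the Hodge bidegrees in which $A_\frakq$ contributes to $H^*(X)$ and the local Arthur parameters (in particular the temperedness) of $A_\frakq$.

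The first step is to enumerate the $A_\frakq$'s contributing to $H^2(X,\bbC)$ and record their Hodge types. The trivial representation gives the line $\bbC\cdot c_1\subset H^{1,1}$; the remaining contributions come from $A_\frakq$'s with nontrivial Levi. By Kudla--Millson theory \cite{KM}, the subspace $\mathrm{SC}^1(X)$ spanned by Poincar\'e duals of codimension-one totally geodesic divisors is precisely the sum of the isotypic components indexed by those $\frakq$ with Levi of the form $\U(n-1,1)\times \U(1)$, and these classes are theta lifts of automorphic forms on $\U(1,1)$. The main input from \cite{Acta} is then a theorem asserting that any cohomology class all of whose $A_\frakq$-components have Levi with a ``large enough'' compact part (equivalently, whose Arthur parameter is sufficiently non-tempered) is, after pull-back to a deep enough congruence cover, in the image of the cup product $\bigwedge\nolimits^2 H^1(X^\prime,\bbC)\to H^2(X^\prime,\bbC)$. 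The mechanism is that such classes arise as theta lifts from smaller unitary groups, and the required $H^1$-classes whose cup products produce them are themselves theta lifts from $\U(1,1)$; iterating the Rallis tower and applying the Burger--Sarnak principle gives the required $H^1$-classes at sufficiently deep level.

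Applying this theorem, the three cases reduce to verifying the Levi condition for the relevant $A_\frakq$'s. For part (1), those with Levi $\U(n-1,1)\times \U(1)$ always satisfy it, for any $n\ge 2$. For part (2), the constraint $n\ge 3$ ensures every $A_\frakq$ contributing to $H^{1,1}(X,\bbC)$ has Levi with sufficient compact part, and for part (3), the constraint $n\ge 4$ extends this to the $A_\frakq$'s contributing to $H^{2,0}\oplus H^{0,2}$ as well. Since only finitely many $A_\frakq$-types appear in a fixed degree and each contributes a finite-dimensional isotypic subspace of $H^2(X,\bbC)$, the finitely many congruence covers produced by applying the theorem of \cite{Acta} to each component can be assembled into a single cover on which $p^*(\phi)$ is a sum of cup products, yielding the statement.

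The main obstacle is the precise matching of the Hodge-theoretic hypotheses in the statement with the Levi/temperedness hypothesis in the cup-product theorem extracted from \cite{Acta}, and verifying that the dimension thresholds $n\ge 3$ and $n\ge 4$ are exactly what the classification demands: at smaller $n$, tempered or borderline cohomological representations begin to contribute to the target subspaces, and cup-product generation from $H^1$ is not ensured by the methods of \cite{Acta}.
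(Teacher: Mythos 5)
Your high-level plan is essentially the same as the paper's: invoke the representation-theoretic structure of $H^*(X,\bbC)$ from Bergeron--Millson--Moeglin \cite{Acta}, use Kudla--Millson to relate special cycles to that structure, and then appeal to a cup-product statement from \cite{Acta} whose dimensional ranges explain the thresholds $n\ge 2,3,4$. The paper does exactly this, quoting \cite[Cor.~7.3]{Acta} (with $q=1$) to get that $H^1$, $H^{1,1}$, and $H^2$ are generated by theta lifts in the ranges $n\ge 2,3,4$ respectively, and Kudla--Millson \cite{KMI,KMII,KM} to place $\mathrm{SC}^1(X)$ inside the theta-lift subspace. So your reading of why the numerology $n\ge 3,4$ appears is correct.

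Where your argument has a real gap is the mechanism by which a theta-lift class becomes a cup product of $H^1$-classes on a deeper congruence cover. You attribute this to ``iterating the Rallis tower and applying the Burger--Sarnak principle,'' and call the needed statement a ``theorem asserting that any cohomology class all of whose $A_\frakq$-components have Levi with a large enough compact part is, after pull-back\ldots, in the image of the cup product.'' Neither of these is what \cite{Acta} actually supplies, and Burger--Sarnak in particular plays no role in the paper's proof. The actual mechanism is to pass to the direct limit cohomology $H^*(\mathrm{Sh}(G),\bbC)=\varinjlim_K H^*(S(K),\bbC)$ of the Shimura tower, where the theta-lift part of $H^2$ factors through cup products of $H^1$-classes; this uses the specific structural results \cite[Prop.~5.4, Prop.~5.19]{Acta} together with the argument of \cite[Thm.~9.3]{Acta} (with the degrees adjusted). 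Descending from the limit to a single finite-level cover $X^\prime$ then produces the congruence cover in the statement. Your proposed patch of ``finitely many isotypic pieces, take a common refinement of covers'' is not how the paper proceeds and is unnecessary once one works at the level of $H^*(\mathrm{Sh}(G))$; more importantly, without identifying the correct factorization result in \cite{Acta}, your argument does not actually produce the $H^1$-classes whose cup products recover $p^*(\phi)$. The Levi-type characterization you give of $\mathrm{SC}^1(X)$ (as the $A_\frakq$-isotypic piece with Levi $\U(n-1,1)\times\U(1)$) is also asserted with more precision than the paper needs or justifies; the paper only uses that these classes land in the theta-lift subspace and that special cycle classes span it.
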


\begin{pf}
Let $G$ be the $\bbQ$-algebraic group associated with $X$ and $\Gam$ as in \S\ref{ssec:Simple}, and suppose that $X$ is a connected component of the Shimura variety $S(K)$. The theorem follows fairly directly from deep structural results on cohomology arising from the theta correspondence; see \cite[\S 7]{Acta} for a precise definition.

Taking $q=1$ and the appropriate values of $(a,b)$ in \cite[Cor.~7.3]{Acta}, we see that $H^1(X, \bbC)$ is generated by the classes of theta lifts for all $n \ge 2$, $H^{1,1}(X, \bbC)$ is generated by theta lifts when $n \ge 3$, and all of $H^2(X, \bbC)$ is generated by the theta correspondence for $n \ge 4$. For $n \ge 2$, Kudla and Millson proved that the Poincar\'e duals to codimension $1$ geodesic subvarieties are contained in the subspace generated by theta lifts and Poincar\'e duals to special cycles span the subspace generated by theta lifts \cite{KMI, KMII, KM} (also see \cite[p.~9]{Acta}).

Now, suppose that $\sig \in H^2(X, \bbC)$ is contained in the subspace generated by theta lifts. The above discussion implies that this is the case for every class satisfying one of the three conditions from the statement of the theorem. Let
\[
\wt{\sig} \in H^*(\mathrm{Sh}(G), \bbC) = \varinjlim_K H(S(K), \bbC)
\]
be the image of $\sig$ in $H^2(\mathrm{Sh}(G), \bbC)$ as in \S\ref{ssec:Cohom}. Using \cite[Prop.~5.4 \& 5.19]{Acta}, arguing almost verbatim as in the proof of \cite[Thm.~9.3]{Acta} (the only change being the degrees of the forms under consideration), we have that $\wt{\sig}$ is in the image of the cup product map
\[
\bigwedge\nolimits^2 H^1(\mathrm{Sh(G)}, \bbC) \lra H^2(\mathrm{Sh(G)}, \bbC).
\]
This implies that there is a congruence cover $p : X^\prime \to X$ so that
\[
p^*(\sig) \in \mathrm{Im}\left(\bigwedge\nolimits^2 H^1(X^\prime, \bbC) \lra H^2(X^\prime, \bbC) \right),
\]
which completes the proof of the theorem.
\end{pf}

\begin{rem}\label{rem:OverQ}
By the universal coefficients theorem, Theorem~\ref{thm:Cup} is also true with coefficients in any subfield of $\bbC$. In particular, it holds over $\bbQ$.
\end{rem}

Theorem~\ref{thm:ChernInSpan} and Theorem~\ref{thm:Cup} combine with Remark~\ref{rem:OverQ} to give the following.

\begin{cor}\label{cor:ChernCover}
Let $X$ be a smooth compact congruence arithmetic ball quotient of simple type with dimension $n \ge 2$. Suppose that $\phi \in H^2(X, \bbZ)$ is one of the following classes:
\begin{enum}

\item $c_1(X)$;

\item $c_1(D)$ for $D \subset X$ a reduced effective divisor with support a union of totally geodesic subvarieties, or any linear combination of such classes;

\item any class in $H^{1,1}(X, \bbQ) \cap H^2(X, \bbZ)$ if $n \ge 3$;

\item an arbitrary element of $H^2(X, \bbZ)$ if $n \ge 4$.

\end{enum}
Then there is a congruence cover $p : X^\prime \to X$ so that the image of $p^*(\phi)$ in $H^2(X^\prime, \bbQ)$ is contained in the image of the cup product map
\[
c_\bbQ : \bigwedge\nolimits^2 H^1(X^\prime, \bbQ) \lra H^2(X^\prime, \bbQ).
\]
\end{cor}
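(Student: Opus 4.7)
The plan is to treat the four cases uniformly: for each type of class $\phi$, identify which part of Theorem~\ref{thm:Cup} applies to produce a congruence cover on which $p^*(\phi)$ is a cup product class over $\bbC$, and then descend the conclusion to $\bbQ$ using Remark~\ref{rem:OverQ}.

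First I would match each case to a part of Theorem~\ref{thm:Cup}. In case (1), Theorem~\ref{thm:ChernInSpan} gives immediately that $c_1(X) \in \mathrm{SC}^1(X)$, so part (1) of Theorem~\ref{thm:Cup} applies. In case (2), by Lemma~\ref{lem:compare} the class $c_1(\calO(D))$ coincides (up to sign) with the Poincar\'e dual $[D]^\vee$; writing $D = \sum n_i D_i$ with each $D_i$ a totally geodesic codimension one subvariety gives $[D]^\vee = \sum n_i [D_i]^\vee \in \mathrm{SC}^1(X)$, and the same holds for any $\bbZ$-linear combination, so again part (1) applies. Cases (3) and (4) are direct: in case (3), the inclusion $H^{1,1}(X,\bbQ) \hookrightarrow H^{1,1}(X,\bbC)$ shows that the image of $\phi$ in $H^2(X,\bbC)$ satisfies the hypothesis of Theorem~\ref{thm:Cup}(2) whenever $n \ge 3$, and in case (4), Theorem~\ref{thm:Cup}(3) applies to $\phi$ without restriction for $n \ge 4$.

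In each case I obtain a congruence cover $p : X^\prime \to X$ such that $p^*(\phi)$, considered as a class in $H^2(X^\prime, \bbC)$, lies in the image of the cup product from $\bigwedge\nolimits^2 H^1(X^\prime, \bbC)$. To conclude over $\bbQ$, I would invoke Remark~\ref{rem:OverQ}, which says that Theorem~\ref{thm:Cup} holds verbatim with $\bbC$ replaced by any subfield of $\bbC$; applying the rational version directly (noting that the rational structure on $\mathrm{SC}^1(X)$, generated by integral Poincar\'e duals, contains our integral classes), the image of $p^*(\phi)$ in $H^2(X^\prime, \bbQ)$ lies in the image of $c_\bbQ$, as required. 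The substantive work has been done in Theorems~\ref{thm:ChernInSpan} and~\ref{thm:Cup}; the only step here is the bookkeeping of matching each class to the appropriate part of Theorem~\ref{thm:Cup}, so no real obstacle arises.
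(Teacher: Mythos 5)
Your proposal is correct and is essentially the paper's own argument: the paper dispatches the corollary in a single sentence as a combination of Theorem~\ref{thm:ChernInSpan}, Theorem~\ref{thm:Cup}, and Remark~\ref{rem:OverQ}, and your case-by-case matching together with the observation that an integral class in the $\bbC$-span of rational classes already lies in the $\bbQ$-span is exactly the bookkeeping that sentence leaves implicit.
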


\begin{rem}
Passage to a finite cover in Corollary~\ref{cor:ChernCover} is necessary. For example, let $X$ be the Cartwright--Steger surface. Then $X$ is the quotient of $\bbB^2$ by a congruence arithmetic lattice of simple type (e.g., see \cite[p.~90]{StoverHurwitz}). Then $H^1(X, \bbZ) \cong \bbZ^2$, so the image of the cup product in $H^2(X, \bbZ)$ is generated by the primitive element $\al \wedge \beta$, where $\al$ and $\beta$ span $H^1(X, \bbZ)$. Since $c_1(X)^2 \neq 0$ but $(\al \wedge \beta)^2 = 0$, we see that $c_1(X)$ cannot be in the image of the cup product map. As shown by Dzambic and Roulleau \cite{DzambicRoulleau}, the Stover surface, a $21$-fold \'etale cover of $X$ with fundamental group a congruence arithmetic lattice, satisfies the conclusion of Corollary~\ref{cor:ChernCover}.
\end{rem}

\section{Proofs of Theorems~\ref{thm:MainRF} and \ref{thm:MainRF2}}\label{sec:Proofs}

We begin by proving Theorem~\ref{thm:MainRF2}, then use it to prove Theorem~\ref{thm:MainRF} as sketched in the introduction.

\begin{pf}[Proof of Theorem~\ref{thm:MainRF2}]
Suppose that $\Gam < \PU(n,1)$, $\phi \in H^2(\Gam \bs \bbB^n, \bbZ)$, and $\wt{\Gam}$ satisfy the hypotheses of the theorem. By \S\ref{subsub:extensions} (in particular Remark~\ref{rem:ext}) and Lemma~\ref{lem:compare}, if $\Gam^\prime \le \Gam$ is a finite index subgroup, then the preimage $\wt{\Gam}^\prime$ of $\Gam^\prime$ in $\wt{\Gam}$ is the central extension of $\Gam^\prime$ with characteristic class $p^*(\phi)$, where $p : \Gam^\prime \bs \bbB^n \to \Gam \bs \bbB^n$ is the associated cover. Thus $\wt{\Gam}^\prime$ is a finite index subgroup of $\wt{\Gam}$, and by Lemma~\ref{lem:behavior} it suffices to prove that $\wt{\Gam}^\prime$ and the associated reductions $\wt{\Gam}_d^\prime$ of $\wt{\Gam}^\prime$ modulo $d$ are residually finite.

By Corollary~\ref{cor:ChernCover}, we can choose $\Gam^\prime$ to be a congruence subgroup of $\Gam$ such that
\[
p^*(\phi) \in \mathrm{Im}\left(\bigwedge\nolimits^2 H^1(X^\prime , \bbQ) \lra H^2(X^\prime, \bbQ) \right),
\]
where $X^\prime = \Gam^\prime \bs \bbB^n$. By Theorem~\ref{thm:CentralRF1}, the extension with group $\wt{\Gam}^\prime$ satisfies condition $N_2$. Since $\Gam^\prime < \PU(n,1)$ is residually finite, $\wt{\Gam}^\prime$ is residually finite by Lemma~\ref{lem:condn} and $\wt{\Gam}_d$ is residually finite by Lemma~\ref{lem:behavior}. Thus $\wt{\Gam}$ and $\wt{\Gam}_d$ are also residually finite, and this completes the proof of the theorem.
\end{pf}

We now deduce Theorem~\ref{thm:MainRF} from Theorem~\ref{thm:MainRF2}.

\begin{pf}[Proof of Theorem~\ref{thm:MainRF}]
Let $\Gam < \PU(n,1)$ be a cocompact arithmetic lattice of simple type and $\wt{\Gam}$ its preimage in $\wt{\PU}(n,1)$. If $n=1$, then it is classical that $\wt{\Gam}$ is residually finite; see the introduction to \cite{StoverToledo} for discussion. If $n \ge 2$, first note that if $\Lam$ is commensurable with $\Gam$, then the preimage $\wt{\Lam}$ of $\Lam$ in $\wt{\PU}(n,1)$ is commensurable with $\wt{\Gam}$ (cf.\ Remark~\ref{rem:ext}). Therefore $\wt{\Gam}$ is residually finite if and only if $\wt{\Lam}$ is residually finite, hence we can assume without loss of generality that $\Gam$ is a torsion-free congruence arithmetic lattice.

Now, $\wt{\Gam}$ is the central extension of $\Gam$ with characteristic class $c_1(\Gam \bs \bbB^n)$. Moreover, the reduction $\wt{\Gam}_d$ of $\wt{\Gam}$ modulo $d$ is isomorphic to the preimage of $\Gam$ in the $d$-fold connected cover of $\PU(n,1)$. By Theorem~\ref{thm:ChernInSpan}, we have that $c_1(\Gam \bs \bbB^n)$ is contained in the span of the Poincar\'e duals to the totally geodesic cycles on $\Gam \bs \bbB^n$, so Theorem~\ref{thm:MainRF2} applies to give residual finiteness of $\wt{\Gam}$ and $\wt{\Gam}_d$. This completes the proof.
\end{pf}

\section{Branched covers of ball quotients}\label{sec:Branched}

We begin with the key application of our main results, which provides existence of the branched covers used to prove Theorem~\ref{thm:NegativeExist}.

\begin{prop}\label{prop:GoodCover}
Fix $d \ge 2$. Let $X = \Gam \bs \bbB^n$ be a smooth compact complex hyperbolic $n$-manifold, $n \ge 2$, with $\Gam$ a congruence arithmetic lattice of simple type. Then we can pass to a congruence cover $X^\prime = \Gam^\prime \bs \bbB^n$ so that $X^\prime$ contains a totally geodesic divisor $D^\prime$ such that:
\begin{enum}
\item $(X^\prime, D^\prime)$ is a good pair in the sense of Definition~\ref{def:goodpair}, and
\item there exists a smooth projective variety $Y$ and a cyclic branched cover $f : Y \to X^\prime$ with branch divisor $D^\prime$ and branching of degree $d$ along each irreducible component of $D^\prime$.
\end{enum}
\end{prop}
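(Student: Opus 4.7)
The plan is to pass through a sequence of congruence covers of $X$, enforcing at each stage an additional property, and then to invoke the cyclic branched cover existence criterion of Corollary~\ref{cor:existbranch}.

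First, since $\Gam$ has simple type, $X$ carries immersed codimension-one totally geodesic subvarieties; these appear as the special cycles $C_\beta$ with $\beta\in\calO^*$ recalled in \S\ref{ssec:Hyper}. A standard separability argument (residual finiteness of $\Gam$, applied to the stabilizers in $\Gam$ of totally geodesic copies of $\bbB^{n-1}$) lets me pass to a congruence cover $X_1\to X$ so that the preimage $D_1$ of a fixed such immersed geodesic divisor becomes smooth and has pairwise disjoint connected components. Then $(X_1,D_1)$ is a good pair in the sense of Definition~\ref{def:goodpair}. Next, by Lemma~\ref{lem:compare} the Chern class $c_1(\calO(D_1))\in H^2(X_1,\bbZ)$ coincides (up to sign) with the Poincar\'e dual of $D_1$ and therefore lies in $\mathrm{SC}^1(X_1)$. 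Corollary~\ref{cor:ChernCover}(2) then supplies a further congruence cover $q:X_2\to X_1$ so that $c_1(\calO(D_2))$, with $D_2=q^*(D_1)$, lies in the image of the cup product $\bigwedge\nolimits^2 H^1(X_2,\bbQ)\to H^2(X_2,\bbQ)$. By Lemma~\ref{lem:pstargood}, $(X_2,D_2)$ remains a good pair, and by Theorem~\ref{thm:CentralRF1} the central extension $0\to\bbZ\to\pi_1(\calO(D_2)^\times)\to\pi_1(X_2)\to 1$ satisfies condition $N_2$, and in particular condition $N$ of Definition~\ref{def:conditionn}.

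Finally, Corollary~\ref{cor:existbranch} (which rests on Theorem~\ref{thm:ndiv}, Proposition~\ref{prop:rfdiv}, and Proposition~\ref{prop:existroots}) produces a finite unramified cover $X'\to X_2$ together with a cyclic $d$-fold branched cover $Y\to X'$ along $D':=$ pullback of $D_2$. Lemma~\ref{lem:pstargood} ensures $(X',D')$ is a good pair, while Lemma~\ref{lem:propsbranch} gives smoothness and projectivity of $Y$ and the stated branching of degree $d$ along each irreducible component of $D'$. The principal obstacle in this chain is the middle step, namely extracting a cover on which the Poincar\'e-dual class of a geodesic divisor is in the image of $\bigwedge\nolimits^2 H^1\to H^2$: it is exactly this Kudla--Millson / Bergeron--Millson--Moeglin input that Corollary~\ref{cor:ChernCover} provides. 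The one technical wrinkle is that the finite-index subgroup of $\pi_1(X_2)$ produced by Theorem~\ref{thm:ndiv} is not a priori congruence; I arrange congruence by intersecting it with any sufficiently deep congruence subgroup of $\pi_1(X_2)$, a refinement that preserves both the good-pair property (Lemma~\ref{lem:pstargood}) and the divisibility of $c_1(\calO(D'))$ by $d$ needed to invoke Proposition~\ref{prop:existroots}.
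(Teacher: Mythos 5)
Your proof follows the paper's argument step for step: pass to a congruence cover to make an immersed totally geodesic divisor embedded with pairwise disjoint components (good pair), invoke Corollary~\ref{cor:ChernCover} to pass to a further congruence cover on which the Poincar\'e dual of the divisor lies in the image of $\bigwedge\nolimits^2 H^1 \to H^2$, apply Theorem~\ref{thm:CentralRF1} to obtain condition $N_2$, and then cite Corollary~\ref{cor:existbranch} (hence Theorem~\ref{thm:ndiv}, Proposition~\ref{prop:rfdiv}, and Proposition~\ref{prop:existroots}) to produce a finite cover carrying the desired $d$-fold cyclic branched cover. You spell out the intermediate bookkeeping (Lemma~\ref{lem:compare}, Lemma~\ref{lem:pstargood}) more explicitly than the paper's proof does, but the route is identical.

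The one place you deviate is the final ``wrinkle,'' and the patch you propose there is wrong. You correctly observe that the finite-index subgroup $\Lambda \le \pi_1(X_2)$ produced by Theorem~\ref{thm:ndiv} is not a priori a congruence subgroup, but intersecting $\Lambda$ with a deep congruence subgroup $K$ does not make it one. A subgroup is congruence precisely when it contains a principal congruence subgroup; if $\Lambda \cap K$ contained such a subgroup $P$, then $\Lambda \supseteq P$ and $\Lambda$ would already be congruence, which is exactly what you do not know. Since the congruence subgroup property is not known for cocompact arithmetic lattices in $\PU(n,1)$ (and is conjectured by Serre to fail for all rank-one groups), one cannot upgrade $\Lambda$ to a congruence subgroup this way; nor can one simply restrict to a congruence $\Lambda'' \le \Lambda$, because no such $\Lambda''$ exists if $\Lambda$ is not itself congruence. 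It is worth noting that the paper's own proof only asserts ``a further finite cover'' at this step and does not actually verify the word ``congruence'' appearing in the statement of the proposition; the congruence-ness of $X'$ is not used in the application to Theorem~\ref{thm:NegativeExist}, so the slip is harmless there, but you should delete your proposed fix rather than leave a false claim in the argument.
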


\begin{pf}
As described in \S\ref{ssec:Hyper}, it is well known that $X$ contains an \emph{immersed} totally geodesic codimension one ball quotient $D$. It is a standard fact that we can replace $X$ with a finite congruence cover and assume that $D$ is in fact embedded, hence $(X, D)$ is a good pair in the sense of Definition~\ref{def:goodpair}. Corollary~\ref{cor:ChernCover} and Theorem~\ref{thm:CentralRF1} imply that condition $N_2$ is satisfied by the central extension of $\Gam$ by $\bbZ$ associated with $\pi_1(\calO(D)^\times)$ as in \S\ref{subsec:basic}. Therefore, by Corollary~\ref{cor:existbranch} there is a further finite cover $p : X^\prime \to X$ so that there exists a cyclic $d$-fold branched cover $Y\to X^\prime$, branched over the totally geodesic divisor $D^\prime = p^*(D)$, with $Y$ smooth.
\end{pf}

We will apply the following theorem of Zheng \cite{Zheng}, where we restate Zheng's notion of a ``good cover'' with smooth totally geodesic branch divisor in our language. His condition \emph{negative definite complex curvature operator} is the same as Siu's very strong negative curvature, which implies strong negative curvature, which in turn implies negative sectional curvature. See \cite[\S 2]{Siu}.

\begin{thm}[Thm.~1 \cite{Zheng}]\label{thm:Zheng}
Let $f : Y \to X$ be a $d$-fold cyclic branched cover of a smooth compact ball quotient $X$ with branch divisor $D$ a disjoint union of smooth totally geodesic codimension one subvarieties and branching of order $d$ along each component. Then $Y$ admits a K\"ahler metric with negative definite complex curvature operator. In fact, the metric on $Y$ is strongly negative in the sense of Siu, hence $Y$ is strongly rigid.
\end{thm}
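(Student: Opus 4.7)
The overall plan is to construct the desired metric on $Y$ by modifying the pullback $f^*g_X$ of the complex hyperbolic metric in a tubular neighborhood of the preimage $f^{-1}(D)$, so that the result is smooth, Kähler, and strongly negative in Siu's sense. Away from a neighborhood of the branch locus, $f$ is an unramified covering of ball quotients, so the pullback metric is already complex hyperbolic and thus automatically strongly negative by the classical curvature formula for $\mathbb{B}^n$. The problem is therefore entirely local along the branch divisor.

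The first step is to set up coordinates. Since $D$ is a disjoint union of smooth totally geodesic codimension one subvarieties, near any component $D_i$ we may use Fermi (tubular) coordinates adapted to $D_i \subset \mathbb{B}^n / \Gamma$, writing the complex hyperbolic metric as a warped product of the induced $\mathbb{B}^{n-1}$-metric on $D_i$ with a rotationally symmetric metric in the normal direction. Concretely, if $w$ is a holomorphic normal coordinate vanishing on $D_i$ and $r = |w|$, the metric takes the form
\[
g_X = g_{D_i} + \cosh^2(r)\, |Dw|^2 + \sinh^2(r)\, r^{-2}\, |w|^2 (\text{angular form}),
\]
modulo explicit lower-order terms, where $|Dw|$ is measured with the normal bundle metric. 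The $d$-fold cyclic cover is locally the map $w \mapsto w^d$, so the pullback $f^*g_X$ degenerates in the angular direction.

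The second step is to replace $\sinh^2(r)$ and $\cosh^2(r)$ (and their refinements in the Kähler potential) by profile functions $\varphi(r), \psi(r)$ chosen so that (i) the resulting Kähler form on $Y$ extends smoothly across $f^{-1}(D_i)$; (ii) outside a small neighborhood of $D_i$ the new metric coincides with $f^*g_X$; and (iii) the Hermitian curvature operator remains negative definite. Condition (i) forces a specific behavior of $\varphi, \psi$ as $r \to 0$ (essentially $\varphi \sim r^{2/d}$ up to a smooth positive factor); condition (ii) is ensured by a cutoff; condition (iii) is the content of the theorem and must be checked by direct calculation. The gluing is performed equivariantly for the $\mathbb{Z}/d$ action so that the resulting metric descends to (and is globally defined on) $Y$.

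The main obstacle is step (iii): verifying that the deformed metric has negative definite complex curvature operator in the sense of Siu, i.e., that the Hermitian form on $T^{1,0}Y \otimes \overline{T^{1,0}Y}$ induced by the curvature tensor is negative definite at every point. Because totally geodesicity of $D$ implies that in Fermi coordinates the mixed curvature terms vanish along $D$, the calculation decouples into a tangential part (curvature of $g_{D_i}$, already complex hyperbolic and strongly negative) and a normal part controlled by the profiles $\varphi, \psi$. One then shows that the inequalities characterizing strong negativity reduce to explicit differential inequalities on $\varphi$ and $\psi$, which are satisfied by the choice above thanks to the rapid growth of $\cosh$ and $\sinh$; this is the quantitative heart of Mostow--Siu's original argument as extended by Zheng. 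Once the strong negativity is established, Siu's theorem \cite{Siu} gives strong rigidity of $Y$ as a corollary.
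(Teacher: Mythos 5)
The paper offers no proof of this theorem; it is quoted from Zheng's work [Thm.~1, \cite{Zheng}], and the sentences surrounding the statement only translate Zheng's hypothesis of a ``good cover'' with totally geodesic branch divisor into the language of good pairs set up in \S\ref{subsub:projective}. There is therefore no internal argument to compare yours against. What you have written is instead a reconstruction of the Mostow--Siu--Zheng strategy, and as such the high-level plan is right: modify $f^*g_X$ in a tubular neighborhood of $f^{-1}(D)$ using profile functions chosen to cap off the degenerate pulled-back metric smoothly, glue equivariantly for the $\bbZ/d$-action, check strong negativity of the curvature, and then invoke Siu's rigidity theorem. Your asymptotics $\varphi(r)\sim r^{2/d}$ for smooth extension across the branch locus are also correct.

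Two points in your sketch are, however, imprecise enough to matter if you actually carried it out. First, the local normal form you wrote for $g_X$ conflates things: there is no global holomorphic coordinate $w$ transverse to $D_i$ unless the normal bundle is trivial, and the Bergman metric in a tube around a totally geodesic complex hypersurface is a warped product in which the tangential block is also scaled by a $\cosh^2$-type factor and the normal-bundle connection enters; Mostow--Siu and Zheng work instead with the geodesic distance to $D$ and a fixed hermitian metric on the normal bundle, building the new K\"ahler potential from those. Second, the claim that the curvature computation ``decouples into a tangential part and a normal part'' because the mixed terms vanish \emph{along} $D$ is true only at $D$ itself. The entire difficulty, and the content of Zheng's theorem, is that the modification is carried out on a tubular neighborhood of positive radius, where the mixed tangential--normal curvature terms are nonzero and must be estimated against the purely tangential and purely normal blocks. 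That estimate, not a decoupling, is where the hyperbolic growth of $\cosh$ and $\sinh$ is actually used, and it is the step you would need to reproduce in full to give a proof.
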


We now prove Theorem~\ref{thm:NegativeExist}.

\begin{pf}[Proof of Theorem~\ref{thm:NegativeExist}]
Let $Y$ be one of the smooth projective varieties provided by Proposition~\ref{prop:GoodCover}. Then $Y$ admits a metric of strongly negative curvature by Theorem~\ref{thm:Zheng}, so to complete the proof we must show that $Y$ is not homotopy equivalent to a locally symmetric manifold. Since $Y$ is a closed manifold admitting a complete metric of negative curvature, it is well known that $\pi_1(Y)$ cannot contain a free abelian subgroup of rank $r \ge 2$. A cocompact lattice in a semisimple Lie group of real rank $r$ contains a subgroup isomorphic to $\bbZ^r$ by a theorem of Wolf \cite[Thm.~4.2]{Wolf}, hence we only need to rule out the possibility that $\pi_1(Y)$ is isomorphic to a lattice in an adjoint simple Lie group of real rank one.

Let $Z$ be a rank one locally symmetric space homotopy equivalent to $Y$. We can exclude $Z$ being real, quaternionic, or Cayley hyperbolic by a result of Carlson and Toledo \cite[Cor.~3.3]{CarlsonToledo}: since $Y$ is a compact K\"ahler manifold, it cannot be homotopy equivalent to a locally symmetric space $Z$ unless $Z$ is locally Hermitian symmetric. This and real rank one implies that $Z$ is a ball quotient. Considering cohomological dimension we see that $\dim(Y) = \dim(Z)$.

If $f : Y \to Z = \Lam \bs \bbB^n$ is a homotopy equivalence, and $g:Z\to Y$ a homotopy inverse to $f$, since both $Y$ and $Z$ have strongly negative curvature, Siu rigidity \cite[Thm.~1]{Siu} implies that we can assume that both $f$ and $g$ are holomorphic maps. Thus $f \circ g$ and $g \circ f$ are holomorphic self maps of $Z$ and $Y$, respectively, that are homotopic to the identity, hence they are the identity. It follows that $f$ is a biholomorphism and $g=f^{-1}$. See also  \cite[Thm.~2]{Siu}

By definition of a branched cover, $\bbZ / d$ acts on $Y$ by holomorphic automorphisms with quotient the ball quotient $X^\prime$ from the statement of Proposition~\ref{prop:GoodCover}. Let $\rho \in \Aut(Y)$ be a generator for this action and $D^\prime$ be the branch locus, which we recall is a disjoint union of smooth totally geodesic subvarieties of $X^\prime$. If $\calD$ denotes the preimage of $D^\prime$ on $Y$, then $\calD$ is a divisor with support a disjoint union of smooth subvarieties of $Y$, each isomorphic to a smooth totally geodesic subvariety $D_i \subset X^\prime$. The fixed point set of $\rho$ is $\calD$, and the cyclic group generated by $\rho$ acts freely on $Y\ssm \calD$.

Consider the holomorphic automorphism
\[
\sig = \left(f \circ \rho \circ f^{-1}\right) : Z \to Z
\]
of period $d$ fixing the divisor $f(\calD)$. Since automorphisms of $Z$ are isometries for its Bergman metric and fixed sets of isometries are totally geodesic, it follows that $f(\calD)$ is a totally geodesic divisor on $Z$ and the support of $f(\calD)$ consists of a disjoint union of smooth subvarieties of $Z$ isomorphic to the ball quotient $D_i$. In particular, each $D_i$ is isometric to a smooth totally geodesic subvariety of both $X^\prime$ and $Z$. Moreover, since $f$ is a biholomorphism, the normal bundle to $D_i$ in $Z$ is isomorphic to the normal bundle of $D_i$ in $Y$. We use this to derive a contradiction.

If $X$ is an $n$-dimensional ball quotient and $D_i$ a smooth totally geodesic subvariety of codimension one, then the normal bundle to $D_i$ in $X$ has first Chern class $\nu$ satisfying $n \nu = c_1(D_i)$ in $H^2(D_i, \bbQ)$. One way to see this is to use the principle of proportionality in dual symmetric spaces: any relation between characteristic classes with $\bbQ$ coefficients that holds in one element of a dual pair must also hold in the other. For $\bbP^{n-1} \subset \bbP^n$ this equality is easily checked. Indeed, the normal bundle is the bundle $\nu = \calO (1)$ with ${c_1(\nu) = x}$, where $x$ is a positive generator of $H^2(\bbP^{n-1},\bbZ)$, while ${c_1(\bbP^{n-1}) = nx}$, thus the equivalent equality must also hold for a geodesic divisor in a ball quotient.

In particular, the first Chern class of the normal bundle to $D_i$ is completely independent of the ambient ball quotient. Applying this to $X^\prime$ and considering the branched cover $Y \to X^\prime$, the normal bundle to $D_i$ in $Y$ has first Chern class
\[
d \nu \in H^2(D_i, \bbQ),
\]
hence the same holds for the normal bundle to $D_i$ in $Z$. This implies that $c_1(D_i) = d c_1(D_i)$, but $d \neq 1$ and $c_1(D_i)$ is nontrivial, hence we arrive at a contradiction. This proves that $Y$ cannot be homotopy equivalent to a ball quotient, which completes the proof of the theorem.
\end{pf}

\bibliography{ArithmeticRF}

\end{document}